\newtheorem{theorem}{Theorem}[section]
\newtheorem{proposition}[theorem]{Proposition}
\newtheorem{lemma}[theorem]{Lemma}
\newtheorem{corollary}[theorem]{Corollary}
\newtheorem{claim}{Claim}
\newcommand{\norm}[1]{\Vert #1 \Vert}
\newcommand{\length}[1]{\mathrm{length}_T ( #1 )}
\newcommand{\diam}[1]{\mathrm{Diam} ( #1 )}
\newcommand{\area}[1]{\mathrm{Area} ( #1 )}
\newcommand{\BoundarySlopeSet}{\mathcal{B}_M}
\begin{document}

\title{Euclidean lengths and the Culler-Shalen norms of slopes}

\author{Kazuhiro Ichihara}
\address{Department of Mathematics, College of Humanities and Sciences, Nihon University,
3-25-40 Sakurajosui, Setagaya-ku, Tokyo 156-8550, Japan}
\email{ichihara.kazuhiro@nihon-u.ac.jp}

\thanks{This work was supported by JSPS KAKENHI Grant Number JP22K03301.}

\dedicatory{Dedicated to Professor Thomas W.~Mattman on his 60th birthday.}

\subjclass[2020]{Primary 57K32; Secondary 57K10}

\date{\today}

\keywords{slope, Culler-Shalen norm, Exceptional surgery, boundary slope, boundary slope diameter}


\begin{abstract}
In the study of exceptional Dehn fillings, two functions on slopes, called the Euclidean length on a horotorus and the Culler-Shalen norm, play important roles. 
In this paper, we investigate their relationship and establish two inequalities between them. 
As a byproduct, some bounds on the boundary slope diameter are given. 
\end{abstract}
\maketitle

\section{Introduction} 

Throughout this paper, let $M$ denote a hyperbolic knot manifold,
that is, a compact, orientable 3-manifold with a single torus boundary $\partial M$,
whose interior admits a complete Riemannian metric of constant curvature $-1$ with finite volume.
The main object studied in this paper is the set of slopes on $\partial M$.
By a \textit{slope} on $\partial M$, we mean the isotopy class of an unoriented simple closed curve on $\partial M$.


Let us start with the motivation and background of our research.
An operation that glues a solid torus $V$ to $M$ along their boundaries is called a \textit{Dehn filling} on $M$.
The slope $r$ on $\partial M$, whose representative is identified with the meridian of $V$, determines the homeomorphism type of the resulting manifold.
Thus, we denote the resulting manifold by $M(r)$, and say that the manifold is obtained from $M$ by Dehn filling along the slope $r$.
Thurston’s well-known hyperbolic Dehn surgery theorem \cite[Theorem 5.8.2]{Th} states that all but finitely many Dehn fillings on $M$ produce closed hyperbolic 3-manifolds.
Accordingly, a Dehn filling on $M$ that yields a non-hyperbolic 3-manifold is called \emph{exceptional}.
Many works have been devoted to studying when, how many, and what types of exceptional Dehn fillings can occur.

In the study of exceptional Dehn fillings, two functions on slopes on $\partial M$, called the \textit{Euclidean length} on a horotorus $T$ and the \textit{Culler-Shalen norm} on $H_1(\partial M ; \mathbb{R})$, play important roles.
See \cite{B} for example, for a survey.
In this paper, we use the notations $\length{\cdot}$ and $\norm{\cdot}$ to denote them, respectively.

Here we give very rough definitions of the two functions.
The Euclidean length of a slope on a horotorus $T$ is defined as the minimal length of its representatives on $T$.
A \textit{horotorus} $T$ in $M$ appears as the image of equivariant horospheres in the hyperbolic $3$-space $\mathbb{H}^3$ under the universal covering map.
This $T$ is naturally identified with $\partial M$ and is endowed with a Euclidean metric induced from the hyperbolic metric on $M$.
Thus, the Euclidean length of curves on $\partial M$ can be defined on $T$, and the Euclidean length of a slope on $\partial M$ is defined accordingly. 
On the other hand, the Culler-Shalen norm is defined by using the set of characters of $SL(2,\mathbb{C})$-representations of the fundamental group $\pi_1(M)$.
This set forms a complex affine algebraic set, which is called the \textit{character variety} of $M$ and denoted by $X(M)$. 
For a slope $r$, by using the trace function, a regular map $I_{\boldsymbol{r}}$ onto $\mathbb{CP}^1$ is defined on the smooth projective model of a one-dimensional component of $X(M)$. 
When $M$ is hyperbolic, there exists a discrete faithful representation $\pi_1(M) \to SL(2,\mathbb{C})$. 
The algebraic component $X_0$ of $X(M)$ containing the character of the representation is known to be one-dimensional. 
From twice the degree of $I_{\boldsymbol{r}}$ on $X_0$, a norm $\norm{ \cdot }_0$ on the real vector space $H_1 (\partial M; \mathbb{R})$ can be obtained, which is called the \emph{Culler–Shalen norm associated to $X_0$}. 
The same construction works for other one-dimensional components of $X(M)$, and in general, a seminorm on $H_1 (\partial M; \mathbb{R})$ is obtained.
Then the \emph{(total) Culler-Shalen norm} $\norm{\cdot}$ on $H_1 (\partial M; \mathbb{R})$ is defined as the total sum of the seminorms on the irreducible one-dimensional components of $X(M)$. 
See the next section for their precise definitions.

Concerning these two functions, we have the following excellent results.

\begin{itemize}
\item
If $r$ is not a strict boundary slope and $\pi_1(M(r))$ is cyclic,
then $\norm{r}_0$ takes the least value among non-trivial slopes \cite[Corollary 1.1.4]{CGLS}.
\item
If $r$ is not a strict boundary slope and $\pi_1(M(r))$ is finite,
then $\norm{r}_0$ is at most $5$ times the least value among non-trivial slopes \cite[Theorem 2.1]{BZ}.
\item
If $M(r)$ does not admit a negatively-curved metric,
then $\length{r}$ is at most $2\pi$ \cite[Theorem 9 (The “$2 \pi$” Theorem)]{BH}.
\item
If $M(r)$ is reducible or $\pi_1(M(r))$ is not word-hyperbolic, then $\length{r}$ is at most $6$ \cite{Ag,L}.
\end{itemize}

See Subsection~\ref{subsec:21} for details of the definitions of the terms appearing above. 

Though the definitions of the two functions are quite different,
all the theorems above seem to have somehow similar flavors:
If $M(r)$ is non-hyperbolic, then the value of $r$ is relatively small.
Thus, it seems natural to ask whether there exists some relationship between them.

\bigskip


In this paper, motivated by the question above, we give some inequalities between them.
We first give a lower bound on the (total) Culler-Shalen norm in terms of the Euclidean length on a horotorus when $M$ is a certain knot exterior in the $3$-sphere $S^3$.

\begin{theorem}\label{th-norm>length}
Suppose that $M$ is the exterior of a hyperbolic two-bridge knot or the exterior of a $(-2,3,n)$-pretzel knot with $n$ odd and at least $7$ in $S^3$. 
Then 
$$
\norm{r} \geq \frac{2}{3} \, \length{r}
$$
holds for any slope $r$ on $\partial M$ and for any horotorus $T$. 
In particular, if $M$ is the exterior of a hyperbolic twist knot or the exterior of a $(-2,3,n)$-pretzel knot with $n$ odd and at least $7$ in $S^3$, then the same inequality holds for $\norm{r}_0$. 
\end{theorem}

A knot $K$ in $S^3$ is called a \textit{two-bridge} knot if its bridge index (the minimum number of local maxima (or local minima) with respect to an axis) is two.
Since two-bridge knots are alternating, a natural consequence of Menasco’s work in \cite{Me84} is that a two-bridge knot is hyperbolic unless it is a $(2,p)$-torus knot. 
Any two-bridge knot is represented by the well-known Conway diagram $C(a_1, \dots,a_m)$ with non-negative integers $a_i$. 
A two-bridge knot represented by the diagram $C(2,n)$ is called a \emph{twist knot}. 
See \cite{IMS} for example. 
A knot $K$ in $S^3$ presented by a diagram obtained by putting rational tangles of the form $1/q_1, \dots, 1/q_n$ in line is called a \textit{pretzel knot}, denoted by $P(q_1, \dots, q_n)$, with integers $q_1, \dots, q_n$.
It is known that a pretzel knot $P(-2,3,n)$ is hyperbolic unless $n = 1, 3, 5$. 
See \cite{Ma} for example. 

Our proof depends on the existence of a pair of boundary slopes with some particular property, which we can verify for such two-bridge knots and $(-2,3,n)$-pretzel knots.
In fact, it is possible that the above inequality holds in a much wider class. 
For example, the inequality also holds for hyperbolic knots
with only two strict essential surfaces, both of which are spanning surfaces, in a general $3$-manifold (Proposition~\ref{prp-TwoSurfaceKnot}). 
See Section~\ref{sec-Norm>Length} for more details.


On the other hand, in general, the converse inequality does not hold.
Indeed, we can find an example of a $3$-manifold
whose boundary contains a slope $r$ with
arbitrarily small ratio $\length{r} / \norm{r}$ for any horotorus $T$ (Proposition~\ref{prp-nonUnivBounds}).
Taking this into account, we obtain the following weaker inequality.
Here, a slope on $\partial M$ is called \textit{integral}
if it has distance one from the meridian, and
called a \textit{boundary slope} if
there exists an embedded essential surface in $M$
whose boundary component represents the slope.
For more details on these terminologies, see the next section.

\begin{theorem}\label{th-length>norm}
Let $m$ be the fixed meridian,
and let $r_1, r_2$ be integral slopes on $\partial M$.
If $r_1$ is greater than or equal to the maximal boundary slope, and $r_2$ is less than or equal to the minimal boundary slope for $M$, then
$$
\length{r_1} + \length{r_2} > \frac{\norm{r_1}+\norm{r_2}}{\norm{m}}
$$
holds for the maximal horotorus $T$. 
\end{theorem}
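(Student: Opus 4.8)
The plan is to show that the hypotheses force the right-hand side to collapse to the slope distance $\Delta(r_1,r_2)$, and then to bound the left-hand side strictly below by this distance using the geometry of the maximal cusp. Throughout I write $\Delta(\cdot,\cdot)$ for the distance (minimal geometric intersection number) of two slopes.

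First I would analyze the right-hand side. Recall that the Culler--Shalen norm admits the decomposition $\norm{\gamma}=\sum_i a_i\,\Delta(\gamma,b_i)$, where the $b_i$ are the boundary slopes strongly detected by the ideal points of the norm curve and each $a_i$ is a positive number. Fix a basis $m,\lambda$ of $H_1(\partial M;\mathbb{Z})$ with $m$ the meridian, write the integral slopes as $r_1=p_1m+\lambda$ and $r_2=p_2m+\lambda$ with $p_1\ge p_2$, and write each detected boundary slope as $b_i=u_im+v_i\lambda$ with $v_i>0$, so that $\Delta(m,b_i)=v_i$. By the extremality hypothesis every detected boundary slope, viewed as the number $u_i/v_i$, lies in $[p_2,p_1]$, so the intersection numbers add:
$$\Delta(r_1,b_i)+\Delta(r_2,b_i)=(p_1v_i-u_i)+(u_i-p_2v_i)=(p_1-p_2)v_i=\Delta(r_1,r_2)\,\Delta(m,b_i).$$
Summing against the weights $a_i$ gives $\norm{r_1}+\norm{r_2}=\Delta(r_1,r_2)\,\norm{m}$, so that the right-hand side of the theorem equals exactly $\Delta(r_1,r_2)$. (Any normalization constant, such as the factor of two in the definition of the norm, cancels in this ratio.)

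Next I would treat the left-hand side on the maximal horotorus $T$. Identifying $T$ with $\mathbb{C}/\Lambda$, where $\Lambda$ is generated by the translation vectors $\mu,\ell\in\mathbb{C}$ realizing $m,\lambda$, we have $\length{r_i}=|p_i\mu+\ell|$ and $\length{m}=|\mu|$. The triangle inequality gives
$$\length{r_1}+\length{r_2}=|p_1\mu+\ell|+|p_2\mu+\ell|\ge|(p_1-p_2)\mu|=\Delta(r_1,r_2)\,\length{m},$$
and this inequality is strict: equality would force $0$ to lie on the line through $p_1\mu+\ell$ and $p_2\mu+\ell$, hence $\ell\in\mathbb{R}\mu$, contradicting the $\mathbb{R}$-linear independence of $\mu$ and $\ell$. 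It then remains to show $\length{m}\ge1$. For this I would invoke the standard packing argument for the maximal cusp: normalize the upper half-space model so that the maximal horoball is $\{t\ge1\}$, whence $T$ is the horosphere $\{t=1\}$ modulo the parabolic subgroup and a translation by $\tau$ has length exactly $|\tau|$. Maximality supplies a $\pi_1(M)$-translate $B_w$ of $\{t\ge1\}$ of Euclidean diameter $1$ tangent to $\{t\ge1\}$; composing with $z\mapsto z+\tau$ produces a second such horoball $B_{w+\tau}$. As distinct translates of an embedded horoball have disjoint interiors, these two diameter-$1$ horoballs based at $w$ and $w+\tau$ cannot overlap, forcing $|\tau|\ge1$. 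Hence every slope, and in particular $m$, has length at least $1$, and combining the three steps yields
$$\length{r_1}+\length{r_2}>\Delta(r_1,r_2)\,\length{m}\ge\Delta(r_1,r_2)=\frac{\norm{r_1}+\norm{r_2}}{\norm{m}}.$$

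I expect the main obstacle to be the right-hand side reduction: one must correctly invoke the Culler--Shalen norm as a weighted sum of distances to boundary slopes and verify that the extremality hypothesis is precisely what makes the intersection numbers additive, so that $\Delta(m,b_i)$ factors out uniformly across all detected slopes. The geometric steps are comparatively routine once the maximal-cusp systole bound is cited, though care is needed both to record that the triangle inequality is strict and to note that the systole estimate genuinely requires the horotorus to be maximal, since for smaller horotori the lengths scale down and may drop below $1$.
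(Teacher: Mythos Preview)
Your proof is correct and follows essentially the same route as the paper: the paper likewise reduces the right-hand side to $\Delta(r_1,r_2)$ via the boundary-slope decomposition of the Culler--Shalen norm under the extremality hypothesis (its Proposition~4.2), and bounds the left-hand side strictly below by $\Delta(r_1,r_2)\,\length{m}\ge\Delta(r_1,r_2)$ using the same triangle inequality on the cusp lattice together with Adams' estimate $\length{m}\ge1$ on the maximal horotorus (its Proposition~4.1 and Lemma~2.3). The only difference is packaging: the paper proves both halves for arbitrary finite slopes $p_i/q_i$ and then specializes to $q_i=1$, whereas you work directly with integral slopes.
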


Precisely, we will prove an extended version of this theorem,
in which the inequality includes the data of the denominators of the numerical slopes.
In the proof, the difference between two numerical slopes plays an important role.
It will be presented in Section~\ref{sec-Length>Norm}.

\bigskip


As a byproduct of the method to prove the theorem above,
we have an improvement on
a lower bound for the boundary slope diameter obtained in \cite{IMS}.

When we set a meridian-longitude system on $\partial M$,
the set of slopes on $\partial M$ is identified
with the set of rational numbers together with $1/0$.
See \cite{R}, for example.
Regarding the set of non-meridional boundary slopes, denoted by $\BoundarySlopeSet$,
as a subset of the set $\mathbb{Q}$ of rational numbers,
let $\diam{\BoundarySlopeSet}$ be the diameter of the set,
i.e., the difference between the greatest and the least elements.

In \cite{CS99}, it is shown that $\diam{\BoundarySlopeSet} \geq 2$ if $M$ is the exterior of a non-trivial, non-cabled knot in an orientable $3$-manifold with cyclic fundamental group.
In terms of the Culler-Shalen norm of a boundary slope associated to an ideal point, a generalization of such a lower bound on $\diam{\BoundarySlopeSet}$ was obtained by Ishikawa, Mattman, and Shimokawa \cite{IMS}.
The following gives an extension of their result.

\begin{theorem}\label{th-diam}
Let $r=p/q$ be a non-meridional numerical boundary slope on $\partial M$. 
Then
$$
\diam{\BoundarySlopeSet} > \frac{\norm{r}}{q \norm{m}}
$$
holds, where $m$ denotes the meridional slope on $\partial M$. 
\end{theorem}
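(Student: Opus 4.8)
The plan is to exploit the fundamental description of the Culler--Shalen norm as a positively weighted sum of distances to the boundary slopes detected at ideal points, and then to convert each such distance into a difference of numerical slopes.

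First I would invoke the structural formula from \cite{CGLS}: on the space of slopes the norm can be written as
$$
\norm{\gamma} = \sum_i a_i \, \Delta(\gamma, \beta_i),
$$
where $\Delta(\cdot,\cdot)$ denotes the minimal geometric intersection number, the $\beta_i$ are the distinct boundary slopes detected at the ideal points of the relevant curve in $X(\pi_1(M))$, and each $a_i$ is a positive integer. The essential input is the Culler--Shalen fact that every such $\beta_i$ is a genuine (non-meridional) boundary slope, so $\beta_i \in \BoundarySlopeSet$ and hence $\beta_{\min} \le \beta_i \le \beta_{\max}$, where $\beta_{\min}$ and $\beta_{\max}$ are the least and greatest elements of $\BoundarySlopeSet$, so that $\beta_{\max}-\beta_{\min} = \diam{\BoundarySlopeSet}$. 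Since $M$ is hyperbolic the norm is positive definite; if all $\beta_i$ coincided as a single slope $\beta$ we would get $\norm{\beta}=0$, so at least two of the $\beta_i$ must be distinct and in particular $\diam{\BoundarySlopeSet} > 0$.

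Writing $\beta_i = p_i/q_i$ and $r = p/q$ in meridian--longitude coordinates with $q, q_i > 0$, I would next record the two identities
$$
\Delta(m, \beta_i) = |q_i|, \qquad \Delta(r, \beta_i) = |p q_i - q p_i| = q\,|q_i|\,|r - \beta_i|,
$$
the second of which is exactly the passage from an intersection number to a scaled difference of numerical slopes. Evaluating the norm at the meridian gives $\norm{m} = \sum_i a_i |q_i|$, and evaluating at $r$ and substituting gives $\norm{r} = q \sum_i a_i |q_i|\,|r - \beta_i|$.

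It then suffices to establish the strict inequality
$$
\sum_i a_i |q_i|\bigl(\diam{\BoundarySlopeSet} - |r - \beta_i|\bigr) > 0,
$$
since it rearranges to $\norm{r} < q\,\diam{\BoundarySlopeSet}\,\norm{m}$, and dividing by $q\norm{m}$ yields the theorem. Each summand is nonnegative because both $r$ (a numerical boundary slope) and every $\beta_i$ lie in $[\beta_{\min}, \beta_{\max}]$, whence $|r - \beta_i| \le \diam{\BoundarySlopeSet}$. The step requiring the most care, and the real content of the argument, is to exhibit a strictly positive summand: here I would observe that the equality $|r - \beta_i| = \diam{\BoundarySlopeSet}$ can occur only when $\{r, \beta_i\} = \{\beta_{\min}, \beta_{\max}\}$, and since at least two of the $\beta_i$ are distinct they cannot all coincide with the single endpoint farthest from $r$. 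Thus some term has $|r - \beta_i| < \diam{\BoundarySlopeSet}$ with weight $a_i |q_i| > 0$, the sum is strictly positive, and the desired strict bound follows.
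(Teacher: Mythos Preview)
Your proof is correct and follows essentially the same route as the paper: both express $\norm{r}$ and $\norm{m}$ via the formula $\norm{\gamma}=\sum_i a_i\,\Delta(\gamma,s_i)$, convert $\Delta(r,s_i)$ into $q\,u_i\,|r-s_i|$, and bound each $|r-s_i|$ by $\diam{\BoundarySlopeSet}$ to obtain $\norm{r}\le q\,\norm{m}\,\diam{\BoundarySlopeSet}$. The only difference is in the strictness step: the paper argues by contradiction, reducing to a two-boundary-slope situation and computing directly, while you observe more directly that positive definiteness of $\norm{\cdot}$ forces at least two distinct positively weighted $\beta_i$, which cannot both equal the single endpoint opposite $r$, so some summand is strictly positive. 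Your packaging of strictness is a bit more streamlined, but the substance is the same.
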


Similar arguments also give similar upper bounds on $\diam{\BoundarySlopeSet}$.
The proof of Theorem~\ref{th-diam} will be presented in Section~\ref{sec-diam}.


In the last section, as an example, we investigate for the figure eight knot exterior the relationship between the Euclidean length on the maximal horotorus and the Culler-Shalen norm of slopes.

\section*{Acknowledgments} 

The author would like to thank Masaharu Ishikawa and Thomas Mattman for helpful discussions, and also thank to Shigeru Mizushima for his help on calculations about boundary slopes for $(-2,3,n)$-pretzel knots. 
He also thanks the anonymous referee for useful comments.

\section{Definitions} 

In this section, we prepare the definitions used in this paper.
Recall that $M$ always denotes a connected, compact, orientable $3$-manifold with a single torus boundary $\partial M$, whose interior is homeomorphic to a complete hyperbolic manifold with finite volume.


\subsection{Slopes}\label{subsec:21}

As usual, the isotopy class of an unoriented, non-trivial simple closed curve on $\partial M$ is called a \textit{slope}.
When we fix a particular simple closed curve on $\partial M$, called the \textit{meridian} (respectively, the \textit{longitude}), we refer to the slope $m$ represented by the meridian as the \textit{meridional slope} (resp. \textit{longitudinal slope}).
The \textit{distance} between two slopes is defined as
the minimal geometric intersection number of their representatives.
We use $\Delta(r, r')$ to denote the distance between slopes $r$ and $r'$.
If we take a pair of a meridian and a longitude on $\partial M$, we always assume that the distance between them is one.
Such a pair is called a \textit{meridian-longitude system} on $\partial M$.
When a meridian–longitude system, say ${ m, l }$, on $\partial M$ is fixed, the set of slopes is identified with the set of rational numbers together with $1/0$:
a slope $r$ corresponds to the fraction $\Delta(r, l) / \Delta(r, m)$.
This gives a natural bijection. See \cite{R}, for example.
We will often abuse the terminology and identify a slope with its corresponding rational number, calling it a \textit{numerical slope} when necessary.
From this point of view, a slope is called \textit{integral} if the corresponding numerical slope lies in $\mathbb{Z}$, i.e., an integral slope has distance one from the meridional slope.
We observe that, for a pair of numerical slopes $p/q$ and $r/s$, the distance $\Delta(p/q, r/s)$ and the difference $|p/q - r/s|$ as rational numbers are related as
\[
\Delta(p/q,r/s) = | ps - qr | = q s \left| \frac{p}{q} - \frac{r}{s} \right|.
\]


The isotopy classes of oriented, non-trivial simple closed curves on $\partial M$ are in natural bijective correspondence with the primitive elements of $H_1(\partial M; \mathbb{Z})$.
Here, an element of $H_1(\partial M; \mathbb{Z})$ is called \textit{primitive} if it is not an integral multiple of any other non-trivial element.
Note that two primitive elements $\boldsymbol{r}, \boldsymbol{r}' \in H_1(\partial M; \mathbb{Z})$
correspond to the same slope if and only if $\boldsymbol{r} = \pm \boldsymbol{r}'$.
Thus, forgetting the orientations, there is a natural two-to-one map from the set of primitive elements of $H_1(\partial M; \mathbb{Z})$ onto the set of slopes on $\partial M$.
Since a meridian and a longitude have distance one, a meridian–longitude system with orientations gives a set of generators of $H_1(\partial M; \mathbb{Z})$,
and also of $\pi_1(\partial M)$.
In particular, a numerical slope $p/q$ corresponds to
the elements $\pm (p \, \boldsymbol{m} + q \, \boldsymbol{l}) \in H_1(\partial M; \mathbb{Z})$,
where $\boldsymbol{m}$ and $\boldsymbol{l}$ denote the elements
corresponding to the oriented meridian and longitude, respectively.


One more key object in the proofs of our results is the boundary slope of an essential surface in $M$.
A two-sided, possibly disconnected, properly embedded surface $F$ in $M$ is called \textit{essential} if it is non-empty, $\pi_1$-injective, and has no $2$-sphere components and no boundary-parallel components.
The boundary components of an essential surface $F$ consist of a parallel family of simple closed curves on $\partial M$.
Thus, they determine a slope, which is called the \textit{boundary slope} of $F$.
A connected essential surface in $M$ is called a \textit{semi-fiber} if either $F$ is a fiber in a fibration of $M$ over $S^1$, or $F$ is the common frontier of two $3$-dimensional submanifolds of $M$,
each of which is a twisted $I$-bundle with associated $\partial I$-bundle $F$.
An essential surface $F$ in $M$ is termed
\textit{strict} if no component of $F$ is a semi-fiber.


\subsection{Length on a horotorus}

In this subsection, we define the Euclidean length of a slope, the angle between slopes, and collect some lemmas used in the following sections.


We are supposing that the interior of $M$, denoted by $\mathrm{Int}(M)$,
admits a complete hyperbolic structure of finite volume.
This means that there exists
a universal covering map $p: \mathbb{H}^3 \to \mathrm{Int} (M)$,
which is a local isometry.
The covering transformation group,
which is isomorphic to the fundamental group $\pi_1(M)$,
acts on $\mathbb{H}^3$ properly discontinuously.
For this action, take an equivariant set of horospheres,
which bound horoballs with disjoint interiors in $\mathbb{H}^3$.
These descend to a properly embedded torus in $M$, which we call a \textit{horotorus}.
A horotorus $T$, together with $\partial M$, cobounds a region homeomorphic to $T^2 \times [0,1]$,
since $\pi_1(\partial M) \cong \mathbb{Z} \oplus \mathbb{Z}$
acts properly discontinuously on the horoball bounded by a horosphere
that appears as a component of $p^{-1}(T)$.
Thus, there is a natural identification between $T$ and $\partial M$.
Such a horotorus $T$ in $M$ is regarded as
a Euclidean torus, as demonstrated in \cite{Th}.
That is, the Riemannian metric induced from
the restriction of the hyperbolic metric of $M$ onto $T$ is Euclidean.
By using this metric,
the length of a curve on $\partial M$ can be defined.
The \textit{length} of a slope $r$ on $\partial M$ is defined as
the minimum of the lengths of
simple closed curves with slope $r$, and
we denote it by $\length{r}$.
Note that this length depends upon the choice of $T$.


When an oriented meridian–longitude system is fixed on a horotorus $T$, we define the \textit{angle} between two slopes on $T$ as follows.
Let $r_1, r_2$ be a pair of slopes on $T$.
Take Euclidean geodesics on $T$ representing $r_1$ and $r_2$.
Orient them so that they are coherent with the longitude.
Then the angle between $r_1$ and $r_2$ is defined as
the angle between the geodesics so oriented, measured on $T$.
We measure the angle from $0$ to $\pi$.
In the following, for a slope $r$, we denote by $\theta_r$ the angle between $r$ and the meridional slope, with $0 < \theta_r \leq \pi$.

With these settings, the next lemma is well-known. 
See \cite{Ag} for example. 

\begin{lemma}\label{lem-LengthDistance}
For a pair of slopes $r, r'$, 
$$
\Delta(r, r') = \frac{ \length{r} \cdot \length{r'} \cdot \sin | \theta_r - \theta_{r'} | }{\area{T}}
$$
holds, where $\area{T}$ denotes the Euclidean area of the horotorus $T$. 
\end{lemma}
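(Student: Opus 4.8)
The plan is to pass to the flat-torus model and reduce the identity to the elementary fact that the area of a parallelogram in $\mathbb{R}^2$ admits two descriptions. First I would write $T=\mathbb{R}^2/\Lambda$, where $\Lambda\subset\mathbb{R}^2$ is the lattice generated by vectors $\boldsymbol{m}$ and $\boldsymbol{l}$ realizing the chosen oriented meridian and longitude; here I use the identification of $\pi_1(\partial M)\cong H_1(\partial M;\mathbb{Z})$ with $\Lambda$, together with the fact (see \cite{Th}) that the metric induced on $T$ is Euclidean. Under this identification a numerical slope $r=p/q$ corresponds to the primitive vector $\boldsymbol{v}_r=p\,\boldsymbol{m}+q\,\boldsymbol{l}\in\Lambda$; its length-minimizing representative is a straight closed geodesic, so $\length{r}=|\boldsymbol{v}_r|$, the Euclidean norm. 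Similarly $\area{T}$ equals the covolume of $\Lambda$, namely $|\det(\boldsymbol{m},\boldsymbol{l})|$.

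The heart of the argument is to compute the unsigned area $|\det(\boldsymbol{v}_r,\boldsymbol{v}_{r'})|$ of the parallelogram spanned by $\boldsymbol{v}_r$ and $\boldsymbol{v}_{r'}$ in two different ways. On the one hand, the planar cross-product formula gives $|\det(\boldsymbol{v}_r,\boldsymbol{v}_{r'})|=|\boldsymbol{v}_r|\,|\boldsymbol{v}_{r'}|\sin\psi=\length{r}\,\length{r'}\sin\psi$, where $\psi\in[0,\pi]$ is the unoriented angle between the two geodesics. On the other hand, writing $r'=s/t$ and expanding by bilinearity and antisymmetry of the determinant, one gets $\det(\boldsymbol{v}_r,\boldsymbol{v}_{r'})=(pt-qs)\det(\boldsymbol{m},\boldsymbol{l})$, so that $|\det(\boldsymbol{v}_r,\boldsymbol{v}_{r'})|=|pt-qs|\cdot\area{T}=\Delta(r,r')\,\area{T}$; the last equality is the relation $\Delta(p/q,s/t)=|pt-qs|$ recorded in the previous subsection, which itself encodes that on a torus the geometric intersection number equals the absolute value of the algebraic one. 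Equating the two expressions and dividing by $\area{T}$ yields the claimed formula, once we know $\psi=|\theta_r-\theta_{r'}|$.

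The only delicate point, and the step I expect to require the most care, is this identification of the angle. By definition $\theta_r$ and $\theta_{r'}$ are the angles that the geodesics for $r$ and $r'$, each oriented coherently with the longitude, make with the meridian, both normalized to lie in $(0,\pi]$. Measuring all angles in the same rotational sense from the meridian direction, the oriented geodesics for $r$ and $r'$ point at angles $\theta_r$ and $\theta_{r'}$; since both lie in $(0,\pi]$, their difference lies in $(-\pi,\pi)$, and hence the unoriented angle between the two lines is exactly $\psi=|\theta_r-\theta_{r'}|\in[0,\pi)$. Finally, reversing the sign of either representative $\boldsymbol{v}_r$ replaces $\psi$ by $\pi-\psi$ but leaves $\sin\psi$ unchanged, so $\sin\psi=\sin|\theta_r-\theta_{r'}|\ge 0$ is well defined; this completes the argument.
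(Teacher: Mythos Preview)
Your proof is correct and follows essentially the same approach as the paper: both compute the area of the parallelogram spanned by the lattice vectors for $r$ and $r'$ in two ways---once via the determinant to get $\Delta(r,r')\cdot\area{T}$, and once via the cross-product formula to get $\length{r}\cdot\length{r'}\cdot\sin|\theta_r-\theta_{r'}|$. Your treatment of the angle identification $\psi=|\theta_r-\theta_{r'}|$ is in fact more careful than the paper's, which simply asserts it.
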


Here, for completeness, we include a simple proof of this lemma by using a graphical view of the length and angle for slopes.
This will also be used in Subsection \ref{subsec-Length>Norm-Length}.
As stated, we identify $\partial M$ with a horotorus $T$, and have fixed a base point on $T$.
Let $\widetilde{T}$ be a component of the preimage of $T$ in the universal covering space $\mathbb{H}^3$ of $\mathrm{Int}(M)$.
This $\widetilde{T}$ is a horosphere in $\mathbb{H}^3$, and so it is naturally identified with the Euclidean plane.
Note that $\widetilde{T}$ is also regarded as the universal covering space of $T$, and they are locally isometric.
Under the covering projection, the preimage of the base point on $T$ gives a lattice on $\widetilde{T}$.
We then take a base point $O$ in the lattice.
A vector from $O$ to a primitive lattice point projects down to an oriented simple closed geodesic on $T$.
Since such a geodesic uniquely exists in the homology class, it determines a primitive element of $H_1(T; \mathbb{Z})$.
Together with the relation between the slopes on $T$ and $H_1(T; \mathbb{Z})$,
we obtain a one-to-two correspondence between the slopes on $T$ and the vectors on $\widetilde{T}$ emerging at $O$ and ending at the primitive lattice points:
Two such vectors on $\widetilde{T}$ correspond to the same slope if and only if their sum is the zero vector.
Now we suppose that an oriented meridian–longitude system on $\partial M$ is arbitrarily chosen and fixed.
Recall that such a system gives a set of generators of $H_1(\partial M; \mathbb{Z})$.
Using the identification of $T$ and $\partial M$, we can take two oriented Euclidean geodesics of meridional and longitudinal slope on $T$ such that both run through the base point on $T$.
We set a $u$–$v$ coordinate on $\widetilde{T}$ so that
the vectors appearing as lifts of those two oriented geodesics starting from the origin end at $(0,1)$ and $(1,0)$, respectively.
For this coordinate, by linearity, the vector appearing as the lift of the oriented geodesic on $T$ with numerical slope $p/q$ (respectively $-p/q$) corresponds to the vector ending at the point $(q,p)$ (resp. $(q,-p)$) on $\widetilde{T}$.
Here we give an orientation to the geodesic on $T$ so that it is coherent with that for the longitudinal slope.
In this way, we have a one-to-one correspondence between the non-meridional slopes on $\partial M$ and the vectors ending at the primitive lattice points in the half-plane $\{ (u,v) \mid u > 0 \}$ on $\widetilde{T}$.
Note that the meridional slope corresponds to the vector ending at $(0,1)$ on $\widetilde{T}$.
In this view, the length of a slope on $T$ is just the length of the corresponding vector, and the angle between two slopes is just the angle between the corresponding two vectors on $\widetilde{T}$.

\begin{proof}[Proof of Lemma \ref{lem-LengthDistance}]
For a pair of numerical slopes $r = p/q$, $r' = p'/q'$,
$\Delta(r, r') = |p q' - p' q|$ holds.
See \cite{B}, for example.

Now we consider the parallelogram $P$ on $\widetilde{T}$
spanned by the vectors corresponding to $r$ and $r'$.
Then the area of $P$ is expressed by
$|p q' - p' q|$ times the area of $T$.
The area of the parallelogram
spanned by the vectors corresponding
to the meridional slope and the longitudinal slope is $\area{T}$.

On the other hand, the area of $P$ is calculated by
$\length{r} \cdot \length{r'} \cdot \sin \theta$,
where
$\theta$ denotes the angle between the vectors corresponding to $r$ and $r'$.

Since $\theta$ is equal to $|\theta_r - \theta_{r'}|$, we have
\begin{eqnarray*}
\Delta(r, r') &=& | p q' - p' q| 
= \frac{\length{r} \cdot \length{r'} \cdot \sin \theta}{\area{T}} \\
&=&\frac{\length{r} \cdot \length{r'} \cdot \sin | \theta_r - \theta_{r'} |}{\area{T}}\ .
\end{eqnarray*}
\end{proof}

We also prepare the following two lemmas, which will be used in the next section.
Here, a horotorus $T$ is called \textit{maximal} if it is maximal among those each of which has no overlapping interior.
If we need an embedded one, we shrink it by an arbitrarily small amount.

\begin{lemma}[Adams {\cite[Lemma 2.4]{Ad}}]\label{lem-minimallength}
Assume that a horotorus $T$ in $M$ is maximal. 
Then $\length{r} \geq 1$ holds for any slope $r$ on $\partial M$. 
\end{lemma}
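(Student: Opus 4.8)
The plan is to pass to the upper half-space model of $\mathbb{H}^3$ and normalize so that the cusp corresponding to $\partial M$ lies at $\infty$. First I would conjugate the holonomy so that the component $\widetilde{T}$ of the preimage of the maximal horotorus $T$ is the horizontal horosphere $\{(z,t) : t = h\}$, with $\mathbb{C}$ identified with the boundary plane and with the peripheral subgroup $\Gamma_\infty \cong \pi_1(\partial M)$ acting on it by the Euclidean translations $z \mapsto z + \lambda$, $\lambda \in \Lambda$, where $\Lambda \subset \mathbb{C}$ is the translation lattice. Since the metric induced on $\{t = h\}$ is $(dx^2 + dy^2)/h^2$, a slope $r$ represented by a primitive $\lambda_r \in \Lambda$ satisfies $\length{r} = |\lambda_r|/h$. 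Thus the assertion becomes $|\lambda| \geq h$ for every nonzero $\lambda \in \Lambda$, and in fact it suffices to treat the primitive ones.

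Next I would record the standard horoball computation: an element $g = \bigl(\begin{smallmatrix} a & b \\ c & d \end{smallmatrix}\bigr)$ of the holonomy group with $c \neq 0$ carries $\{t \geq h\}$ to a horoball tangent to $\mathbb{C}$ at $g(\infty) = a/c$ of Euclidean diameter $1/(|c|^2 h)$. I would then invoke maximality of $T$: expanding the whole equivariant family of horoballs simultaneously until the first tangency, at least one pair touches, and after applying a group element I may assume one of the pair is $\{t \geq h\}$ itself. Hence there is some $g$ with $c \neq 0$ whose image horoball $B := g\{t \geq h\}$ has diameter exactly $h$, that is $|c| = 1/h$, tangent to $\{t \geq h\}$ from below at the point $(a/c, h)$.

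The key geometric step is then to exploit the disjointness of the horoball packing. The $\Lambda$-translates of $B$, namely $\tau B$ for the parabolics $\tau : z \mapsto z + \lambda$, are again full-size horoballs of diameter $h$, each tangent to $\mathbb{C}$ at $a/c + \lambda$, and all lie in the equivariant family, so their interiors are pairwise disjoint. Two horoballs of diameter $h$ resting on $\mathbb{C}$ at points $p$ and $q$ have centers at Euclidean height $h/2$, so the distance between their centers equals $|p - q|$, and disjointness of interiors forces $|p - q| \geq h$. Applying this to $p = a/c$ and $q = a/c + \lambda$ gives $|\lambda| \geq h$ for every nonzero $\lambda \in \Lambda$, whence $\length{r} = |\lambda_r|/h \geq 1$ for every slope $r$. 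If an embedded horotorus is required, one shrinks $T$ slightly as in the statement, which only decreases lengths by an arbitrarily small factor.

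The step I expect to demand the most care is the reduction supplied by maximality: turning the abstract hypothesis that $T$ is maximal into the concrete existence of a group element $g$ whose image horoball has diameter exactly $h$ and is tangent to the horoball at $\infty$. This rests on the fact that enlarging the cusp neighborhood expands every horoball of the equivariant family at the same rate, so that the obstruction to further expansion is a genuine tangency that can be transported to involve $\{t \geq h\}$ itself. By contrast, the horoball-diameter formula and the elementary distance estimate for two tangent-to-$\mathbb{C}$ horoballs are routine.
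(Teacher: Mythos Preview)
Your argument is correct and is essentially the classical proof (due to Adams) that the shortest geodesic on a maximal cusp torus has length at least $1$. The paper itself does not prove this lemma; immediately after stating it and the companion Lemma~\ref{lem-LengthEulerChara} the author writes that ``these are simplified versions of the results obtained in \cite{Ad} and \cite{Ag}, so we omit the proofs here.'' So there is nothing to compare against beyond the cited source, and your write-up is faithful to that source: normalize the cusp at $\infty$ at height $h$, use maximality to locate a full-sized horoball of Euclidean diameter $h$ tangent to the horoball at $\infty$, and then observe that its $\Lambda$-translates form a packing of diameter-$h$ balls on the plane, forcing $|\lambda|\ge h$ and hence $\length{r}=|\lambda_r|/h\ge 1$.

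One small remark on your final sentence: shrinking $T$ to obtain an embedded horotorus does indeed \emph{decrease} lengths (it increases $h$), so the inequality for the embedded torus is only $\length{r}\ge 1-\varepsilon$. That is consistent with how the paper uses the lemma (it is applied only to the maximal $T$), but be aware that the bound $1$ is sharp for the maximal cusp and not literally preserved after shrinking.
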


\begin{lemma}[Agol {\cite[Lemma 5.1]{Ag}} ]\label{lem-LengthEulerChara}
Suppose that an essential surface $S$ with boundary slope $s$ in $M$ is given. 
Then 
$$
\length{s} \leq 6 \frac{ -\chi}{b}
$$ 
holds, where $\chi$ denotes the Euler characteristic of $S$ and $b$ the number of boundary components of $S$.
\end{lemma}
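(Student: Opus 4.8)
The plan is to sandwich the intrinsic area of $S$ between an upper bound coming from Gauss--Bonnet and a lower bound coming from the area that $S$ is forced to spend running into the maximal cusp. First I would homotope $S$ to a pleated surface (or a least-area representative) carrying a complete finite-area hyperbolic metric, with each boundary component of $S$ realized as a cusp sent into the cusp of $M$. Since this induced metric has curvature $\le -1$, Gauss--Bonnet gives $\area{S} \le 2\pi(-\chi)$, with equality in the pleated case.

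The second ingredient is a lower bound on the area of $S$ lying inside the maximal cusp. Normalizing the maximal cusp as the horoball above height $1$ in the upper half-space model, so that $T$ sits at height $1$, each boundary curve of $S$ is freely homotopic on $T$ to a curve of slope $s$, hence of length at least $\length{s}$. The piece of $S$ running from such a curve up to the cusp point is an annulus, and among all such annuli the area-minimizer is the totally geodesic vertical sheet over the slope-$s$ geodesic; its area I would compute to be exactly the length of that geodesic, using $\int_{1}^{\infty} t^{-2}\,dt = 1$. Summing over the $b$ boundary components shows that $S$ has area at least $b\,\length{s}$ inside the cusp.

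Combining the two estimates gives $b\,\length{s} \le \area{S} \le 2\pi(-\chi)$, and hence $\length{s} \le 2\pi (-\chi)/b$.

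The main obstacle is improving the universal constant from $2\pi \approx 6.28$ to the sharper value $6$. The vertical sheet used above is genuinely area-minimizing, so the cusp estimate cannot be tightened on its own; the extra savings must come from the area that an essential surface is forced to spend outside the maximal cusp (it cannot be boundary-parallel, so it must return from the cusp), or from a more careful accounting of how the pleated surface crosses $T$. Showing that this surplus area is bounded below by a definite multiple of $-\chi$, enough to absorb the gap between $2\pi$ and $6$, is the delicate point where I expect the real work to lie.
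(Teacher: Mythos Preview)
The paper itself does not prove this lemma: immediately after stating it, the author writes that it is a simplified version of a result of Agol and omits the proof. So there is no ``paper's own proof'' to compare against; one can only compare with Agol's original argument.

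Your framework is the right one and is essentially Thurston's: realize $S$ as a pleated surface so that $\area{S}=2\pi(-\chi)$, and observe that the preimage in $S$ of the maximal cusp is a union of cusp neighbourhoods whose total area is at least $b\,\length{s}$ (the area of a cusp equals the length of its bounding horocycle, and each such horocycle maps onto $T$ as a curve of slope $s$). As you say, this yields the weaker inequality $\length{s}\le 2\pi(-\chi)/b$.

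Where your speculation goes astray is in looking for the improvement \emph{outside} the cusp. In the pleated model the ends of $S$ do not ``return'' from the cusp; each boundary component of $S$ becomes a genuine cusp of the intrinsic hyperbolic surface, mapped properly into the cusp of $M$. The sharpening from $2\pi$ to $6$ comes instead from an \emph{upper} bound on how much of the total area the cusp region can occupy. Because the maximal horoballs in $\mathbb{H}^3$ have disjoint interiors, their preimages under the (developed) pleated map give a packing of $\mathbb{H}^2$ by horodisks. B\"or\"oczky's density theorem then bounds the packing density by $3/\pi$ (the density achieved by mutually tangent horodisks in an ideal triangle of area $\pi$, contributing area $3$). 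Hence the cusp region in $S$ has area at most $(3/\pi)\cdot 2\pi(-\chi)=6(-\chi)$, and combining with your lower bound $b\,\length{s}$ for that same region gives $\length{s}\le 6(-\chi)/b$. That packing estimate, not any extra area forced outside the cusp, is the missing ingredient.
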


These are simplified versions of the results obtained in \cite{Ad} and \cite{Ag}, so we omit the proofs here.

\subsection{Culler-Shalen norm}

In this subsection, we define the Culler--Shalen norm of a slope and state a lemma used in the next section.
The theory of such norms was originally developed by Culler and Shalen.
See \cite{CS04} as a basic reference for this subsection.
See also \cite{S} for a survey.

We denote by $R(\pi_1(M))$ the set of representations of $\pi_1(M)$ into $SL_2(\mathbb{C})$.
It is naturally regarded as a complex affine algebraic set.
We denote by $X(M)$ the set of all characters of representations in $R(\pi_1(M))$.
The set $X(M)$ is endowed with the structure of an affine algebraic set as in \cite{CGLS}.
Thus it is called the \textit{character variety} of $\pi_1(M)$, or simply of $M$.
For a hyperbolic 3-manifold $M$, there exists a particular $SL_2(\mathbb{C})$-representation of $\pi_1(M)$ induced from the hyperbolic structure.
This is a discrete, faithful representation of $\pi_1(M)$ into $SL_2(\mathbb{C})$, the so-called holonomy representation.
See \cite{Th} for a basic reference.
Then we define a \emph{principal component} $X_0$ of the character variety $X(M)$ to be a component that contains the character of the holonomy representation. 
This $X_0$ is known to be an irreducible one-dimensional component of $X(M)$, i.e., a complex affine algebraic curve, due to Thurston. 
See \cite{S} for detailed explanations.
For $X_0$, we denote by $\tilde{X_0}$ the unique smooth projective curve that admits a birational correspondence $\phi: \tilde{X_0} \to X_0$. 
That is, the curve $\tilde{X_0}$ is constructed by desingularization to obtain a projective completion of $X_0$.
A point $x \in \tilde{X_0}$ is said to be an \textit{ideal point} if it does not correspond to any point of $X_0$ under $\phi$.
If $\gamma$ is an element of $\pi_1(M)$, then let $I_\gamma : X_0 \to \mathbb{C}$ denote the rational function on $X_0$ defined by $I_\gamma(\chi) = \chi(\gamma) = \operatorname{trace}(\rho(\gamma))$.

The next proposition assures the existence of a norm on $H_1(\partial M; \mathbb{R})$, which is called the \textit{Culler-Shalen norm} associated to $X_0$.

\begin{proposition}[Culler--Shalen {\cite[Proposition 5.7]{CS04}}]\label{prop24}
Let $X_0$ be a principal component of $X(M)$ and let $x_1, \cdots, x_n$ denote the ideal points of $\tilde{X_0}$.
Then there exists a unique norm $\norm{\cdot}_0$ on the vector space $H_1(\partial M; \mathbb{R})$ such that for any element $\alpha \in H_1(\partial M; \mathbb{Z}) \subset H_1(\partial M; \mathbb{R})$, it satisfies 
$$\norm{\alpha}_0 = \deg\!\left( {I_{[c]} |_{X_0}}^2 - 4 \right) = 2\,\deg\!\left( I_{[c]} |_{X_0} \right).$$ 
Here $[c]$ denotes the element in $\pi_1(\partial M)$ represented by a closed curve $c$ corresponding to $\alpha$.
Moreover, there are strict essential surfaces $S_1, \cdots, S_n$ in $M$ corresponding to the ideal points $x_1, \cdots, x_n$ of $\tilde{X_0}$ such that
\begin{equation}\label{eq-CS0}
\norm{\alpha}_0 = \sum_{j=1}^n \Delta_{\partial M}(c, \partial S_j)
\end{equation}
holds for any homology class $\alpha \in H_1(\partial M; \mathbb{Z}) \subset H_1(\partial M; \mathbb{R})$ represented by a closed curve $c$ in $\partial M$.
Here $\Delta_{\partial M}(c, \partial S_j)$ denotes the minimal geometric intersection number on $\partial M$ between the closed curve $c$ and $\partial S_j$, which we regard as a parallel family of closed curves. 
\end{proposition}

We remark that each $S_j$ in the proposition above is the surface captured by the action on the tree $T_{x_j}$ associated to the ideal point $x_j$, which is called a $T_{x_j}$-surface in \cite{CS04}. 
In fact, it is a possibly disconnected, separating surface properly embedded in $M$. 
See \cite{CS04} for further details.

The same machinery as above almost works for any one-dimensional component of the character variety $X(M)$ that contains the character of an irreducible representation, and one can, in general, obtain a seminorm associated with the component. 
Then, the (total) Culler-Shalen norm $\norm{\cdot}$ on $H_1(\partial M; \mathbb{R})$ is defined as the total sum of the values of the seminorms on the irreducible one-dimensional components of $X(M)$.

 If the seminorm obtained for a one-dimensional component (curve) of $X(M)$ is a genuine norm, then the component is called a \emph{norm curve}. 
By Proposition~\ref{prop24}, the principal component $X_0$ is a norm curve.

Based on these settings, we have the following lemma, which is well known to specialists.

\begin{lemma}\label{lem-NormDistance}
Let $s_1, \dots, s_m$ be all the boundary slopes on $\partial M$. 
For any element $\boldsymbol{\alpha} \in H_1(\partial M; \mathbb{Z}) \subset H_1(\partial M; \mathbb{R})$,
\begin{equation}\label{eq-CS}
\norm{\boldsymbol{\alpha}} = \sum_{i=1}^m a_i \, \Delta(r, s_i)
\end{equation}
holds, where each $a_i$ is a non-negative integer, and $r$ is the slope corresponding to $\boldsymbol{\alpha} \in H_1(\partial M; \mathbb{Z}) \subset H_1(\partial M; \mathbb{R})$. 
Moreover, if $s_i$ is associated to an ideal point of some norm curve of $X(M)$, then $a_i$ is a positive even integer.
\end{lemma}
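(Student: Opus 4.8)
The plan is to read off equation~\eqref{eq-CS} from the surface-theoretic formula~\eqref{eq-CS0} of the Culler--Shalen proposition, and then to recover the parity of the coefficients from the analytic formula $\norm{\alpha}=2\deg(I_{[c]}|_{X_0})$. First I would use that each strict essential surface $S_i$ appearing in~\eqref{eq-CS0} has a single boundary slope $\sigma_i$, so that $\partial S_i$ is a parallel family of $b_i$ copies of a curve of slope $\sigma_i$, where $b_i$ is the number of boundary components of $S_i$. Since the minimal geometric intersection number is additive over a parallel family, this yields $\Delta_{\partial M}(c,\partial S_i)=b_i\,\Delta(r,\sigma_i)$, where $r$ is the slope corresponding to $\alpha$. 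Summing over $i$ and grouping the surfaces by their boundary slopes among the list $s_1,\dots,s_n$ of all boundary slopes then gives
\[
\norm{\alpha}=\sum_{i=1}^n\Delta_{\partial M}(c,\partial S_i)=\sum_{j}\Big(\sum_{\sigma_i=s_j}b_i\Big)\Delta(r,s_j),
\]
so that $a_j=\sum_{\sigma_i=s_j}b_i$ is a non-negative integer, with $a_j=0$ for any boundary slope not detected by an ideal point. The positivity assertion is then immediate: if $s_j$ is associated to an ideal point, at least one $S_i$ has boundary slope $s_j$, and an essential surface has at least one boundary component, so $a_j\geq b_i\geq 1>0$.

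The main obstacle is showing that each $a_j$ is \emph{even}. For this I would not argue from the total norm alone---knowing only that $\norm{\alpha}$ is even for every $\alpha$ is not enough to isolate a single coefficient---but instead decompose the norm into the contributions of the individual ideal points. Since $I_{[c]}$ is a rational function on $\tilde{X_0}$ whose poles all lie at the ideal points $x_1,\dots,x_m$, its degree equals $\sum_k\Pi_k([c])$, where $\Pi_k([c])\in\mathbb{Z}_{\geq 0}$ is the order of the pole of $I_{[c]}$ at $x_k$ and depends only on $\alpha$. Hence $\norm{\alpha}=\sum_k s_k(\alpha)$ with $s_k(\alpha):=2\,\Pi_k([c])$, and each $s_k$ takes even values on $H_1(\partial M;\mathbb{Z})$.

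The delicate point, where the tree-action machinery underlying the proposition really enters, is to identify each $s_k$ as a multiple of a distance function. By the Culler--Shalen construction, the essential surface dual to the splitting at $x_k$ has a boundary slope $\tau_k$ on which $I_{[c]}$ stays bounded, and more generally the pole order $\Pi_k([c])$ is proportional to the algebraic intersection of $\alpha$ with $\tau_k$; equivalently, $s_k$ is a seminorm on $H_1(\partial M;\mathbb{R})$ whose kernel is the line spanned by $\tau_k$. This forces $s_k(\alpha)=c_k\,\Delta(r,\tau_k)$ for some non-negative integer $c_k$. Evaluating on a slope $r$ with $\Delta(r,\tau_k)=1$ gives $c_k=s_k(\alpha)=2\,\Pi_k([c])$, which is even, so each $c_k$ is a non-negative even integer. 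Grouping these contributions by slope yields $\norm{\alpha}=\sum_j a_j\,\Delta(r,s_j)$ with $a_j=\sum_{\tau_k=s_j}c_k$ a sum of non-negative even integers, hence itself a non-negative even integer; moreover the $\tau_k$ are precisely the boundary slopes of the dual surfaces of~\eqref{eq-CS0} and thus lie among $s_1,\dots,s_n$, consistent with the surface count $a_j=\sum_{\sigma_i=s_j}b_i$ obtained above. I expect the verification that $s_k$ genuinely has this seminorm form---that is, the compatibility of the analytic pole order $\Pi_k$ with the surface-theoretic boundary data---to be where all the real work lies, the parity then being forced by the factor of $2$ in the definition of the norm.
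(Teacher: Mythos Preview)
Your argument is correct, but it takes a different route from the paper for the evenness of the $a_j$. The paper obtains this in one stroke from a topological fact: by \cite[Proposition~3.10]{CS04}, each strict essential surface $S_i$ produced by the Culler--Shalen proposition is \emph{separating} in $M$. A separating surface bounds in $H_2(M,\partial M;\mathbb{Z}/2)$, so $[\partial S_i]=0$ in $H_1(\partial M;\mathbb{Z}/2)$; since each boundary component represents a primitive (hence mod~$2$ nontrivial) class, the number $b_i$ of boundary components must be even. Thus $a_j=\sum_{\sigma_i=s_j}b_i$ is a sum of even integers, and the lemma follows without ever touching the pole-order decomposition. Your approach instead unpacks the analytic definition, writing $\norm{\alpha}=\sum_k 2\,\Pi_k(\alpha)$ and invoking the (standard but nontrivial) fact that each $\Pi_k$ is a seminorm of the form $\tfrac{1}{2}c_k\,\Delta(\,\cdot\,,\tau_k)$, recovering evenness from the global factor of~$2$. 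This works, but as you rightly flag, verifying the seminorm shape of $\Pi_k$ requires the Bass--Serre tree machinery underlying the proposition, whereas the paper sidesteps that analysis entirely by quoting the separating property of the dual surfaces. The paper's route is shorter and more self-contained; yours makes the individual ideal-point contributions explicit, which is closer in spirit to how the coefficients are actually computed in examples.
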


\begin{proof}
Consider first the Culler-Shalen norm $\norm{\cdot}_0$ associated with a principal component $X_0$. 
From Equation~\eqref{eq-CS0}, the equality for $\norm{\cdot}_0$ in the lemma is obtained by setting $a_i = 0$ whenever $s_i$ does not correspond to a surface $S_j$ as in the proposition above. 
Also, by \cite[Proposition 3.10]{CS04}, each $S_j$ in the proposition is a separating surface in $M$.
This implies that, by setting $a_i$ as the sum of the numbers of connected components of the $\partial S_j$'s, the sum of $\Delta_{\partial M} ( c , \partial S_j )$'s for the surfaces $S_j$ with boundary slope $s_i$ is equal to $a_i \, \Delta ( r , s_i )$. 
Since $\Delta_{\partial M} ( c , \partial S_j )$ denotes the minimal geometric intersection number between $c$ and $\partial S_j$, and $S_j$ is a (possibly disconnected) separating surface, each $a_i$ must be a positive even integer. 
For the total Culler-Shalen norm $\norm{\cdot}$, by definition, the same arguments imply the statements in the lemma.
\end{proof}

As explained above, there is a natural two-to-one map
from the set of primitive elements of $H_1(\partial M; \mathbb{Z})$ onto the set of slopes on $\partial M$.
That is, a slope $r$ corresponds to $\pm \boldsymbol{r} \in H_1(\partial M; \mathbb{Z})$.
Since $\norm{\boldsymbol{r}} = \norm{-\boldsymbol{r}}$ holds, we define $\norm{r}$ for a slope $r$ by setting $\norm{r} = \norm{\boldsymbol{r}} = \norm{-\boldsymbol{r}}$.
In the rest of the paper, we will use this notation.

\section{Lower bound on Culler-Shalen norm}\label{sec-Norm>Length}

In this section, we give a proof of Theorem~\ref{th-norm>length}.
The essential part of the proof is provided by the following proposition.

\begin{proposition}\label{prop-Norm>Length}
Suppose that there exist two distinct boundary slopes $s_1, s_2$ of essential surfaces $S_1, S_2$ in $M$ such that they are associated with ideal points of some norm curves in $X(M)$. 
Let $\chi_i$ denote the Euler characteristic of $S_i$ and $b_i$ the number of boundary components of $S_i$ for $i=1,2$, respectively. 
If the inequality 
$$  
\Delta(s_1 , s_2) \geq 2 \frac{-\chi_i}{ b_i},  
$$  
is satisfied, then  
$$  
\norm{r} \geq  
\frac{2}{3} \, \length{r}  
$$  
holds for any slope $r$ on $\partial M$ and for any horotorus $T$.
\end{proposition}

\begin{proof}
Let $s_1, \cdots , s_n$ be boundary slopes on $\partial M$. 
By Lemma \ref{lem-NormDistance}, we have 
$
\norm{ r }  = \sum^n_{i=1} a_i \Delta ( r, s_i)
$
for a slope $r$, with non-negative integers $a_1, \cdots, a_n$ with $a_i \ge 2$. 

By assumption, there exist two boundary slopes, say $s_1, s_2$, on $\partial M$ associated with some ideal points of norm curves in $X(M)$. 
This implies that $a_1$ and $a_2$ are both strictly positive, in fact, they are both at least $2$, by Lemma \ref{lem-NormDistance}. 

Then, obviously we have
$$
\norm{ r }  \geq a_1 \Delta ( r, s_1) + a_2 \Delta ( r, s_2) > 0 \ .
$$

Together with Lemma \ref{lem-LengthDistance}, this implies  
\begin{eqnarray*}
\norm{ r }  
&\geq& 
\frac{\length{r}}{\area{T}}
\left(
a_1 \length{s_1} \sin | \theta_r - \theta_{s_1} | 
+ 
a_2 \length{s_2} \sin | \theta_r - \theta_{s_2} | 
\right) \ .
\end{eqnarray*}

Now, without loss of generality, we assume that $a_1 \length{s_1} \geq a_2 \length{s_2}$. 
Then we obtain:
\begin{equation}\label{eq-1}
\norm{ r }  
\geq
\frac{\length{r}}{\area{T}} \cdot a_2 \length{s_2} \cdot 
\left(
\sin | \theta_r - \theta_{s_1} | + \sin | \theta_r - \theta_{s_2} | 
\right)
\end{equation}

\begin{claim}
$
\sin | \theta_r - \theta_{s_1} | + \sin | \theta_r - \theta_{s_2} | 
\ge 
\sin | \theta_{s_1} - \theta_{s_2} | 
$
holds. 
\end{claim}

\begin{proof}
First we assume that $\theta_{s_1} > \theta_{s_2}$. 
Then we have three cases. 
If $\pi > \theta_r \geq \theta_{s_1} > \theta_{s_2} > 0$, then the following holds: 
\begin{eqnarray*}
\sin | \theta_{s_1} - \theta_{s_2} | 
&=&
\sin ( \theta_{s_1} - \theta_{s_2} ) \\
&=&
\sin \left( ( \theta_r - \theta_{s_2} ) - ( \theta_r - \theta_{s_1} ) \right) \\
&=&
\sin ( \theta_r - \theta_{s_2} ) 
\cos ( \theta_r - \theta_{s_1} ) 
-
\cos ( \theta_r - \theta_{s_2} ) 
\sin ( \theta_r - \theta_{s_1} ) \\
& \le &
\sin ( \theta_r - \theta_{s_2} ) + \sin ( \theta_r - \theta_{s_1} ) \\
& = & 
\sin | \theta_r - \theta_{s_1} | + \sin | \theta_r - \theta_{s_2} | 
\end{eqnarray*}
For the case where $\pi > \theta_{s_1} > \theta_{s_2} \geq \theta_r \geq 0$, 
similar calculations can apply. 
If $\pi > \theta_{s_1} \geq \theta_r \geq \theta_{s_2} > 0$, then the following also holds: 
\begin{eqnarray*}
\sin | \theta_{s_1} - \theta_{s_2} | 
&=&
\sin ( \theta_{s_1} - \theta_{s_2} ) \\
&=&
\sin \left( ( \theta_{s_1} - \theta_r ) + ( \theta_r - \theta_{s_2} ) \right) \\
&=&
\sin ( \theta_{s_1} - \theta_r ) 
\cos ( \theta_r - \theta_{s_2} ) 
+
\cos ( \theta_{s_1} - \theta_r ) 
\sin ( \theta_r - \theta_{s_2} )  \\
& \le  &
\sin ( \theta_{s_1} - \theta_r )  + \sin ( \theta_r - \theta_{s_2} ) \\
& = & 
\sin | \theta_r - \theta_{s_1} | + \sin | \theta_r - \theta_{s_2} | 
\end{eqnarray*}
It can be shown for the case where $\theta_{s_1} < \theta_{s_2}$ in the same way. 
\end{proof}

It follows from this claim and Equation \ref{eq-1} 
$$
\norm{ r }  
\geq
\frac{\length{r}}{\area{T}} \cdot a_2 \length{s_2} \cdot 
\sin | \theta_{s_1} - \theta_{s_2} | \ .
$$

Again, by Lemma \ref{lem-LengthDistance}, we get 
\begin{eqnarray*}
\norm{ r }  
& \geq &
\frac{\length{r} \cdot a_2 \cdot \length{s_2} \cdot \sin | \theta_{s_1} - \theta_{s_2} |}{\area{T}} \\
& = &
a_2 \cdot \frac{\length{r}}{\length{s_1}} \cdot 
\frac{\length{s_1} \length{s_2} \sin | \theta_{s_1} - \theta_{s_2} |}{\area{T}} \\
& = &
a_2 \cdot \frac{\length{r} \cdot \Delta(s_1,s_2) }{\length{s_1}} \ .
\end{eqnarray*}

With Lemma \ref{lem-LengthEulerChara}, this implies
\begin{eqnarray*}
\norm{ r }  
& \geq & 
a_2 \cdot \frac{\length{r} \cdot \Delta(s_1,s_2) }{\length{s_1}} \\
& \geq &
a_2 \cdot \frac{\length{r} \cdot \Delta(s_1,s_2) }{6 ( -\chi_1 / b_1 ) } \\
& \geq &
\frac{a_2}{6} \cdot \frac{\Delta(s_1,s_2) }{-\chi_1 / b_1} \cdot \length{r} 
\end{eqnarray*}

Consequently, by the assumption that 
$
\Delta(s_1 , s_2) \geq 2 ( -\chi_1 /  b_1 )
$
and $a_2 \geq 2$, we obtain the desired inequality; 
$$
\norm{r} \geq 
\frac{2}{3} \, \length{r} \ .
$$

\end{proof}

We here give a proof of Theorem \ref{th-norm>length}. 

\begin{proof}[Proof of Theorem \ref{th-norm>length}]
Let $M$ be the exterior of a hyperbolic two-bridge knot or the exterior of a $(-2,3,n)$-pretzel knot with $n$ odd and at least $7$ in $S^3$. 

By virtue of Proposition~\ref{prop-Norm>Length}, it suffices to show that there exist two boundary slopes $s_1, s_2$ on $\partial M$ such that they are associated to some ideal points of $X(M)$, and some essential surfaces $S_1, S_2$ with boundary slopes $s_1, s_2$ satisfy 
$$
\Delta(s_1 , s_2) \geq 2 \frac{-\chi_i}{ b_i }, 
$$
where $\chi_i$ denote the Euler characteristic of $S_i$ and  
$b_i$ the number of boundary components of $S_i$ for $i=1,2$, respectively. 

First, consider the case where $M$ is the exterior of a hyperbolic two-bridge knot. 
Then it is proved in \cite{O} that every boundary slope on $\partial M$ is associated to an ideal point of norm curves in $X(M)$. 
See \cite{BC} for more explanations. 
In particular, as is stated in \cite[4.2]{IMS}, if the knot is twist knots, the character variety has only one norm curve $X_0$, and so $\norm{\cdot}=\norm{\cdot}_0$ holds. 
Also see \cite{BMZ} for this case. 
On the other hand, we know that any two-bridge knot is \textit{alternating}, i.e., it admits a diagram with alternatively arranged over-crossings and under-crossings running along it. 
Then, by \cite{DR}, both checkerboard surfaces $S_1$ and $S_2$ for a reduced alternating diagram of an alternating knot are essential. 

Let $C$ be the number of crossings of the alternating diagram, which is actually the minimal crossing number of the knot (\cite{Kau,Mu,Thi}). 
It is well-known that the distance $\Delta(s_1 , s_2)$ of the boundary slopes $s_1$ and $s_2$ of $S_1$ and $S_2$ is equal to $2C$. 
Also, we see that $b_i=1$ for the number of boundary components $b_i$ of $S_i$ and 
\[
\chi_1 + \chi_2 = 2 - C
\]
for the Euler characteristic $\chi_i < 0$ of $S_i$ with $i=1,2$. 
It follows that 
$$
\Delta(s_1 , s_2) = 2 C \geq 2C - 4 =  
2 \big( (- \chi_1) + (-\chi_2) \big) \geq 2 \frac{-\chi_i}{ b_i }, 
$$
for each $i=1,2$.

Next, consider the case where $M$ is the exterior of a $(-2,3,n)$-pretzel knot with $n$ odd and at least $7$ in $S^3$. 
These are well-known to be hyperbolic, and the following are proved in \cite{Ma}: 
The character variety $X(M)$ has only one norm curve $X_0$, and so $\norm{\cdot}=\norm{\cdot}_0$ holds.
Moreover, every boundary slope on $\partial M$ is associated to an ideal point of the norm curve $X_0$. 

On the other hand, by \cite[5.3.3. Corollaries]{IM}, there are two boundary slopes $s_1=16$ and $s_2=2n+6$ of two essential surfaces $S_1$ and $S_2$ with the number of boundary components $b_1=b_2=1$ and the Euler characteristics $\chi_1 =6-n$ and $\chi_2 = -1$, respectively. 
For these surfaces, we have the following as desired. 
\[
\Delta(s_1 , s_2) = (2n + 6) - 16 = 2n - 10 \ge  2 \frac{-\chi_1}{ b_1 } = 2 n - 12 \ge 2 \frac{-\chi_2}{ b_2 } =2 .
\]
\end{proof}

Also, as an application of Proposition~\ref{prop-Norm>Length}, we have the following. 
See \cite{CS04} for precise definitions of the terms used in the proposition. 

\begin{proposition}\label{prp-TwoSurfaceKnot}
Suppose that $M$ is the exterior of a hyperbolic knot in $S^3$ which contains only two strict essential surfaces, which are both spanning surfaces, up to isotopy. 
Then 
$$
\norm{r} \geq \frac{2}{3} \, \length{r}
$$
holds for any slope $r$ on $\partial M$ and for any horotorus $T$. 
\end{proposition}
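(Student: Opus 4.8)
The plan is to reduce Proposition~\ref{prp-TwoSurfaceKnot} to Proposition~\ref{prop-Norm>Length} by verifying its hypotheses for this special class of knot exteriors. The main point is that I already have the machine in Proposition~\ref{prop-Norm>Length}: if I can exhibit two boundary slopes $s_1, s_2$, each associated to an ideal point of $X(\pi_1(M))$, carried by essential surfaces $S_1, S_2$ satisfying $\Delta(s_1,s_2) \geq 2(-\chi_i)/b_i$ for $i=1,2$, then the desired inequality $\norm{r} \geq \frac{2}{3}\length{r}$ follows immediately for every slope $r$ and every horotorus $T$.

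First I would invoke the hypothesis that $M$ contains, up to isotopy, exactly two strict essential surfaces, both of which are spanning surfaces, and I would take these as $S_1$ and $S_2$. Since the knot is in $S^3$, a spanning surface has a single boundary component, so $b_1 = b_2 = 1$; this simplifies the target inequality to $\Delta(s_1,s_2) \geq -2\chi_i$ for each $i$. The key structural fact I would lean on is the correspondence, from the Culler--Shalen theory in \cite{CS04}, between ideal points of $\tilde{X_0}$ and strict essential surfaces: because $M$ possesses essential surfaces detected at ideal points and its only strict essential surfaces are $S_1$ and $S_2$, both of these must be the surfaces associated to ideal points of $X(\pi_1(M))$. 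This is precisely what is needed to guarantee that the coefficients $a_1, a_2$ in Lemma~\ref{lem-NormDistance} are strictly positive (hence at least $2$), which is the hypothesis Proposition~\ref{prop-Norm>Length} requires.

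The remaining and most delicate step is to establish the distance inequality $\Delta(s_1,s_2) \geq -2\chi_i$. For a knot in $S^3$ one has the longitude as a canonical reference, and both spanning surfaces can be taken to be checkerboard-type or spanning surfaces whose boundary slopes are even integers. I expect to argue, as in the two-bridge case inside the proof of Theorem~\ref{th-norm>length}, that the boundary slopes of two spanning surfaces are related to a crossing number $C$ via $\Delta(s_1,s_2) = 2C$ while the Euler characteristics satisfy a relation of the form $\chi_1 + \chi_2 = 2 - C$. From such relations the chain
$$
\Delta(s_1,s_2) = 2C \geq 2C - 4 = 2\big((-\chi_1) + (-\chi_2)\big) \geq -2\chi_i
$$
would deliver the bound for each $i$, exactly paralleling the alternating-knot computation already carried out.

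The hard part will be justifying the quantitative relations between $\Delta(s_1,s_2)$, $C$, and the $\chi_i$ in the generality of an arbitrary hyperbolic knot in $S^3$ with only two strict essential surfaces, since here I cannot appeal to a specific alternating diagram as in the two-bridge case. The cleanest route is to show that the two spanning surfaces, together with the splitting properties of spanning surfaces in a knot exterior, force $\chi_1 + \chi_2 \geq 2 - \tfrac{1}{2}\Delta(s_1,s_2)$, equivalently $\Delta(s_1,s_2) \geq 2\big((-\chi_1)+(-\chi_2)\big) + 4 \geq -2\chi_i$; establishing this genus-versus-distance estimate from the spanning-surface structure, rather than from diagrammatics, is where the real work lies. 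Once that inequality is in hand, Proposition~\ref{prop-Norm>Length} applies verbatim and the proof is complete.
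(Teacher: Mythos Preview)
Your overall strategy---reduce to Proposition~\ref{prop-Norm>Length} by verifying its two hypotheses for $S_1$ and $S_2$---is exactly what the paper does. The observation that $b_1=b_2=1$ and the argument that both surfaces must be detected by ideal points are also in line with the paper's reasoning.

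The genuine gap is in the crucial step, the inequality $\Delta(s_1,s_2)\ge -2\chi_i$. Your proposed route is to mimic the alternating-knot argument: find a ``crossing number'' $C$ with $\Delta(s_1,s_2)=2C$ and $\chi_1+\chi_2=2-C$, then chain inequalities. But these identities are specific to checkerboard surfaces of a reduced alternating diagram; for an arbitrary hyperbolic knot in $S^3$ whose only strict essential surfaces happen to be two spanning surfaces, there is no diagram in sight and no reason for any such $C$ to exist. You recognize this and retreat to the target inequality $\chi_1+\chi_2\ge 2-\tfrac12\Delta(s_1,s_2)$, but you give no mechanism for proving it ``from the spanning-surface structure,'' and in fact no purely topological argument of that flavor is known.

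The paper sidesteps this entirely by quoting \cite[Corollary~7.6]{CS04}: for a knot manifold with at most two isotopy classes of strict essential surfaces (and not Seifert fibered or an exceptional graph manifold), one has $\chi_i \ge -\tfrac{1}{2}\,b_1 b_2\,\Delta(s_1,s_2)$. With $b_1=b_2=1$ this is exactly $\Delta(s_1,s_2)\ge 2(-\chi_i)/b_i$. The point is that this inequality in \cite{CS04} is itself proved via the Culler--Shalen norm (using that the norm ball must be a parallelogram when there are only two strict surfaces), not via any diagrammatic or spanning-surface combinatorics. So the missing ingredient in your plan is not a refinement of the alternating argument but rather this norm-theoretic result, which you should cite directly.
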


\begin{proof}
The following statement is given as \cite[Corollary 7.6]{CS04}.  
Suppose that $M$ is a knot manifold (i.e., a compact, irreducible, orientable 3–manifold whose boundary is a torus) and that $M$ has at most two distinct isotopy classes of strict essential surfaces  
and $M$ is neither Seifert fibered nor an exceptional graph manifold.  
Let $S_1$ and $S_2$ be representatives of the two isotopy classes of connected strict essential surfaces.  
Let $s_i$ denote the boundary slope of $S_i$, $b_i$ denote the number of boundary components of $S_i$, and $\chi_i$ denote the Euler characteristic of $S_i$ for $i=1,2$.  
Then for $i = 1, 2$ we have  
\[
\chi_i \ge \frac{- b_1 b_2 \Delta (s_1, s_2) }{2}. 
\]  
When $S_1$ and $S_2$ are both spanning surfaces, i.e., $b_1=b_2=1$, it follows that  
\[
\Delta (s_1, s_2) \ge 2 \frac{-\chi_i}{b_i}
\]  
for $i=1,2$.  
Then, by Proposition~\ref{prop-Norm>Length},  
$$
\norm{r} \geq \frac{2}{3} \, \length{r}
$$  
holds for any slope $r$ on $\partial M$ and for any horotorus $T$.  
\end{proof}

In Proposition \ref{prop-Norm>Length}, the first condition that the two boundary slopes $s_1, s_2$ on $\partial M$ are associated to some ideal points of norm curves in $X(M)$ is necessary for our proof, but it is not easy to verify for a given knot.  
On the other hand, the second condition on the slopes $s_1, s_2$ can be satisfied for large classes of knots, as described below.

\begin{proposition}\label{prp-AltMontKnot}
Suppose that $M$ is the exterior of a hyperbolic alternating knot or a hyperbolic Montesinos knot in $S^3$.  
Then there exist two boundary slopes $s_1, s_2$ on $\partial M$ such that some essential surfaces $S_1, S_2$ with boundary slopes $s_1, s_2$ satisfy  
\[
\Delta(s_1 , s_2) \geq 2 \frac{-\chi_i}{ b_i }, 
\]
where $\chi_i$ denote the Euler characteristic of $S_i$ and  
$b_i$ the number of boundary components of $S_i$ for $i=1,2$, respectively.  
\end{proposition}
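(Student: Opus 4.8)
\textbf{Proof proposal for Proposition~\ref{prp-AltMontKnot}.}

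The plan is to handle the alternating and Montesinos cases separately, since the source of the two useful boundary slopes is different in each. For the alternating case, I would simply extract the relevant geometry from the proof of Theorem~\ref{th-norm>length}. There we used that a reduced alternating diagram with $C$ crossings produces two checkerboard surfaces $S_1, S_2$, essential by \cite{DR}, each connected with a single boundary component ($b_1 = b_2 = 1$), whose boundary slopes satisfy $\Delta(s_1, s_2) = 2C$, and whose Euler characteristics obey $\chi_1 + \chi_2 = 2 - C$. The chain of inequalities
\[
\Delta(s_1, s_2) = 2C \geq 2C - 4 = 2\big((-\chi_1) + (-\chi_2)\big) \geq 2\frac{-\chi_i}{b_i}
\]
then gives the desired bound for each $i = 1, 2$. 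The only adjustment from the proof of Theorem~\ref{th-norm>length} is that I no longer need the boundary slopes to be associated to ideal points, so the hypothesis is purely about the diagram; this makes the alternating case essentially a restatement of calculations already in hand.

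For the Montesinos case I would invoke the classification of essential surfaces in Montesinos knot exteriors due to Hatcher and Oertel, together with the explicit boundary-slope data that flows from it. The key point is that a hyperbolic Montesinos knot admits a (finite) collection of candidate essential surfaces carried by the branched-surface/edgepath machinery, and from this list one can select two spanning surfaces (or more generally two surfaces) whose boundary slopes are far apart relative to their normalized Euler characteristics $-\chi_i / b_i$. Concretely, I would look for the two extremal boundary slopes — the maximal and minimal elements of the boundary slope set — realized by surfaces whose $-\chi_i/b_i$ is controlled, and check that their distance dominates $2(-\chi_i/b_i)$. The cleanest route is to find two spanning surfaces so that $b_1 = b_2 = 1$ and then verify $\Delta(s_1, s_2) \geq 2(-\chi_i)$ directly from the Hatcher–Oertel formulas for slopes and Euler characteristics in terms of the continued-fraction data of the rational tangles.

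The main obstacle will be the Montesinos case, specifically producing a uniform guarantee across all hyperbolic Montesinos knots that two such surfaces exist with the required separation. Unlike the alternating situation, where the two checkerboard surfaces come for free from any reduced alternating diagram and the identity $\chi_1 + \chi_2 = 2 - C$ is exact, the Montesinos boundary slopes and Euler characteristics vary with the tangle parameters, so I expect to need either a case analysis based on the number of tangles and their signs, or a structural argument showing that the extreme slopes are always realized by surfaces of small enough complexity. I anticipate leaning on the fact that many Montesinos knots are themselves alternating (reducing to the first case), and treating the genuinely non-alternating Montesinos knots via the explicit incompressible-surface census; the delicate sub-case will be verifying the inequality when the two selected surfaces are not both spanning, where one must track $b_1 b_2$ in the analogue of the bound $\chi_i \geq -b_1 b_2 \Delta(s_1,s_2)/2$ from \cite[Corollary 7.6]{CS04} rather than assuming $b_1 = b_2 = 1$.
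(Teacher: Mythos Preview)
Your alternating case is correct and matches the paper verbatim.

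For the Montesinos case, though, you are proposing to redo from scratch work that the paper simply cites. The paper invokes \cite[Corollary 1.4]{IM2}, which already states: if $s_1, s_2$ are the maximal and minimal non-meridional boundary slopes of a hyperbolic Montesinos knot, then there exist essential surfaces $S_1, S_2$ realizing them with
\[
\Delta(s_1,s_2) \geq 2\left(\frac{-\chi_1}{b_1} + \frac{-\chi_2}{b_2}\right),
\]
from which the desired inequality for each $i$ follows trivially since both summands are non-negative. Your proposed route through the Hatcher--Oertel edgepath machinery, with a case analysis on the tangle parameters, is essentially the content of \cite{IM2} itself; so what you flag as ``the main obstacle'' has already been cleared in the literature and need not be reproved here.

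Two further remarks on your Montesinos sketch. First, there is no need to restrict to spanning surfaces ($b_i=1$); the cited result from \cite{IM2} handles general $b_i$. Second, your fallback to \cite[Corollary 7.6]{CS04} is misplaced: that corollary applies only to knot manifolds with at most two isotopy classes of strict essential surfaces, which is a very restrictive hypothesis and certainly not satisfied by a generic Montesinos knot.
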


Here a knot having a diagram obtained by putting rational tangles in line is called a \textit{Montesinos knot}. 

\begin{proof}[Proof of Proposition~\ref{prp-AltMontKnot}]
Suppose first that $M$ is the exterior of a hyperbolic alternating knot.  
Then, as stated in the proof of Theorem~\ref{th-norm>length}, both checkerboard surfaces $S_1$ and $S_2$ for a reduced alternating diagram of an alternating knot are essential and satisfy $b_1 = b_2 = 1$, and  
\[
\Delta(s_1 , s_2) = 2 C \geq 2C - 4 = 
2 ( (- \chi_1) + (-\chi_2)) \ge 2 \frac{-\chi_i}{ b_i }, 
\]
where $C$ denotes the number of crossings of the alternating diagram and $\chi_i$ the Euler characteristic of $S_i$ for $i=1,2$.

Suppose next that $M$ is the exterior of a hyperbolic Montesinos knot $K$.  
Then, the following holds by \cite[Corollary 1.4]{IM2}.  
Among its non-meridional boundary slopes, let $s_1$ and $s_2$ be the maximum and minimum, respectively.  
Then, there exist two essential surfaces $S_1$ and $S_2$ with boundary slopes $s_1$ and $s_2$ such that  
\[
\Delta ( s_1 , s_2 ) \ge 2\,\left( \frac{-\chi_1}{b_1}  + \frac{-\chi_2}{b_2} \right) \, ,
\]
where $\chi_i$ and $b_i$ denote the Euler characteristic and the number of boundary components of $S_i$ for $i=1,2$.  
Thus, it follows that  
\[
\Delta(s_1 , s_2) \geq 2 \frac{-\chi_i}{ b_i }, 
\]
holds for $i=1,2$.  
\end{proof}

We can find an example of a $3$-manifold  
whose boundary has a slope $r$ with  
an arbitrarily large ratio $\norm{r} / \length{r}$.  

\begin{proposition}\label{prp-nonUnivBounds}  
There exist $3$-manifolds whose boundary tori have slopes $r$ with arbitrarily large ratio $\norm{r} / \length{r}$.  
\end{proposition}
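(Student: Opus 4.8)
The plan is to produce an explicit infinite family of hyperbolic knot exteriors in which the Culler--Shalen norm of a single boundary slope grows without bound while its Euclidean length stays bounded. I would use the twist knots $K_n$; more generally, one may take any family of hyperbolic two--bridge knots whose reduced alternating diagrams have crossing number $C_n \to \infty$ together with one checkerboard surface of bounded Euler characteristic. For the exterior $M_n$ of such a knot, let $S_1, S_2$ be the two checkerboard surfaces of a reduced alternating diagram with $C_n$ crossings. By \cite{DR} these are essential, they satisfy $b_1 = b_2 = 1$, and their boundary slopes $s_1, s_2$ satisfy $\Delta(s_1, s_2) = 2 C_n$, exactly as recalled in the proof of Theorem~\ref{th-norm>length}. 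A count of the checkerboard regions (there are $C_n + 2$ of them, split as $C_n - 1$ and $3$ for the twist knots) shows that one surface, say $S_1$, has $-\chi_1 = 1$ for every $n$, while $-\chi_2 = C_n - 3 \to \infty$.

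The estimate then combines the two length/norm dictionaries already established. Since $M_n$ is the exterior of a hyperbolic two--bridge knot, every boundary slope is associated to an ideal point of $X(\pi_1(M_n))$ by \cite{O}; in particular $s_2$ is, so the corresponding coefficient $a_2$ in Lemma~\ref{lem-NormDistance} is at least $2$, giving
\[
\norm{s_1} \ \geq\ a_2 \, \Delta(s_1, s_2)\ \geq\ 2 \cdot 2 C_n\ =\ 4 C_n .
\]
On the geometric side, $S_1$ is essential with $-\chi_1 = 1$ and $b_1 = 1$, so Lemma~\ref{lem-LengthEulerChara} yields $\length{s_1} \leq 6 (-\chi_1)/b_1 = 6$ on the maximal horotorus. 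Hence
\[
\frac{\norm{s_1}}{\length{s_1}}\ \geq\ \frac{4 C_n}{6}\ =\ \frac{2 C_n}{3}\ \longrightarrow\ \infty \qquad (n \to \infty).
\]

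To obtain the statement for an arbitrary horotorus, I would invoke the fact that every horotorus is obtained from the maximal one by shrinking toward the cusp, which rescales the induced Euclidean metric by a factor at most $1$ and therefore can only decrease $\length{s_1}$; since $\norm{s_1}$ does not depend on the horotorus, the bound $\length{s_1} \leq 6$, and hence the inequality $\norm{s_1}/\length{s_1} \geq 2 C_n / 3$, persists for every horotorus $T$. Letting $n \to \infty$ then exhibits slopes with arbitrarily large ratio $\norm{r}/\length{r}$, equivalently arbitrarily small $\length{r}/\norm{r}$, for any horotorus, which is exactly the assertion.

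I expect the main obstacle to be arranging the two competing features in a single family: the distance between two ideal--point boundary slopes must grow, in order to push the norm up through Lemma~\ref{lem-NormDistance}, while one of the associated essential surfaces must keep bounded Euler characteristic, in order to cap the length through Lemma~\ref{lem-LengthEulerChara}. For the twist knots this balance is visible directly from the region count, but verifying that the bounded--complexity checkerboard surface is the one fed into the length bound, and that the far slope $s_2$ is genuinely an ideal--point slope so that $a_2 \geq 2$, are the two points that rely essentially on \cite{DR} and \cite{O}.
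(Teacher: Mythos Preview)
Your argument is correct and follows a genuinely different route from the paper's. The paper exhibits the $(-2,3,n)$-pretzel knots with $n$ odd, $n\geq 7$, $n\not\equiv 0\pmod 3$, and takes the \emph{meridional} slope: Mattman's computation \cite{Ma} gives $\norm{m}=3n-9$, while the exceptional-filling bound of Agol and Lackenby \cite{Ag,L} gives $\length{m}\leq 6$ since $M(m)\cong S^3$. You instead stay entirely within the toolkit already assembled for Theorem~\ref{th-norm>length}: a two-bridge family, Ohtsuki's ideal-point result \cite{O}, and the internal Lemmas~\ref{lem-LengthEulerChara} and~\ref{lem-NormDistance}, applied to a \emph{boundary} slope rather than the meridian. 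What your approach buys is self-containment---no external input beyond what the paper already invokes for its main theorem---while the paper's version is shorter because it reads both bounds off a single slope via two ready-made results.

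Two minor remarks. First, your last paragraph is sound because the paper defines a horotorus via horoballs with disjoint interiors, so every horotorus is at most the maximal one and lengths can only decrease; the paper's own proof relies on the same observation implicitly. Second, your region count is correct but the labels are swapped: the checkerboard surface with $C_n-1$ regions is the one with $-\chi=1$, not the one with $3$ regions. This is only a naming issue and does not affect the argument, since you correctly feed the bounded-$\chi$ surface into Lemma~\ref{lem-LengthEulerChara} and use the distant slope to bound $\norm{\cdot}$ from below.
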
  

\begin{proof}  
For example, consider the exterior $M$ of a $(-2,3,n)$-pretzel knot in $S^3$ with $n$ odd, at least $7$, and $n \not\equiv 0 \pmod 3$.  
Then, by \cite{Ma}, the norm $\norm{m}$ of the meridional slope $m$ is equal to $3n - 9$.  
However, since Dehn filling along $m$ yields $S^3$, which is exceptional, the Euclidean length of the slope $m$ is at most 6 by \cite{Ag, L}.  
\end{proof}

\section{Lengths, norms and numerical slopes}\label{sec-Length>Norm}

The aim of this section is to give proofs of the following two propositions.

\begin{proposition}\label{prp-length}
Let $r_1 = p_1 / q_1$ and $r_2 = p_2 / q_2$ be finite numerical slopes on $\partial M$. 
Then 
$$
\frac{\length{r_1}}{q_1} + \frac{\length{r_2}}{q_2} > | r_1 - r_2 |
$$
holds for the maximal horotorus $T$. 
\end{proposition}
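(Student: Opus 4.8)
The plan is to work entirely on the universal cover $\widetilde{T}$ of the maximal horotorus, using the dictionary between slopes and lattice vectors set up just before Lemma~\ref{lem-LengthDistance}. Recall from that dictionary that, in the $u$-$v$ coordinates on $\widetilde{T}$, a non-meridional slope $r = p/q$ (normalized so that $q > 0$) is represented by a Euclidean vector $\mathbf{v}(r)$ ending at $(q,p)$ whose genuine Euclidean length equals $\length{r}$, while the meridian is represented by the vector ending at $(0,1)$. The crucial device is to pass to the \emph{normalized} vectors $\mathbf{w}_1 = \tfrac{1}{q_1}\mathbf{v}(r_1)$ and $\mathbf{w}_2 = \tfrac{1}{q_2}\mathbf{v}(r_2)$; by linearity these end at $(1, r_1)$ and $(1, r_2)$, so their Euclidean lengths are exactly $|\mathbf{w}_i| = \length{r_i}/q_i$, the two quantities on the left-hand side.

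First I would compute the difference vector. Since $\mathbf{w}_1$ and $\mathbf{w}_2$ share the $u$-coordinate $1$, the difference $\mathbf{w}_1 - \mathbf{w}_2$ ends at $(0, r_1 - r_2)$; that is, it is exactly $(r_1 - r_2)$ times the vector representing the meridian, so $|\mathbf{w}_1 - \mathbf{w}_2| = |r_1 - r_2|\,\length{m}$. The triangle inequality then gives
\[
\frac{\length{r_1}}{q_1} + \frac{\length{r_2}}{q_2} = |\mathbf{w}_1| + |\mathbf{w}_2| \ \geq\ |\mathbf{w}_1 - \mathbf{w}_2| = |r_1 - r_2|\,\length{m}.
\]
Moreover this inequality is strict: equality in the triangle inequality would force $\mathbf{w}_1$ and $\mathbf{w}_2$ to point in opposite directions, which is impossible because both have the same positive $u$-component. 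To conclude, I would invoke Lemma~\ref{lem-minimallength} (Adams): since $T$ is the maximal horotorus, $\length{m} \geq 1$, whence $|r_1 - r_2|\,\length{m} \geq |r_1 - r_2|$ and the claimed strict inequality follows.

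I expect the main obstacle to be conceptual rather than computational, namely pinning down the role of the horotorus. The inequality simply cannot hold for an arbitrary $T$: shrinking the horotorus rescales every slope length toward $0$ while $|r_1 - r_2|$ is fixed, so the conclusion would fail. What rescues the argument is exactly the normalization by the $q_i$, which turns the difference of the normalized vectors into a pure meridian multiple and thereby isolates the factor $\length{m}$, together with the maximality of $T$ that makes $\length{m}\geq 1$. A minor point to keep straight is that throughout the dictionary ``length'' and ``angle'' refer to the honest Euclidean metric on $\widetilde{T}$, the $u$-$v$ labels serving only to identify which lattice vector corresponds to a given slope.
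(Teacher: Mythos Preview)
Your proof is correct and follows essentially the same route as the paper: your normalized vectors $\mathbf{w}_i$ ending at $(1,r_i)$ are exactly the paper's points $P',Q'$ obtained by intersecting the rays $OP,OQ$ with the line of integral slopes, and both arguments finish with the triangle inequality together with Lemma~\ref{lem-minimallength}. Your write-up is in fact slightly more explicit than the paper's in two respects---you spell out why the triangle inequality is strict, and you flag that the maximal horotorus is needed (the paper's proof only says ``choose a horotorus'' before invoking Lemma~\ref{lem-minimallength}).
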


\begin{proposition}\label{prp-norm}
Let $r_1 = p_1 / q_1$ and $r_2 = p_2 / q_2$ be finite numerical slopes on $\partial M$. 
Then 
\[
\frac{\norm{r_1}}{q_1 \norm{m}} + \frac{\norm{r_2}}{q_2 \norm{m}} \geq | r_1 - r_2 |
\]
holds.
Moreover, if $r_1$ is greater than or equal to the maximal boundary slope and 
$r_2$ is less than or equal to the minimal boundary slope for $M$, 
then equality holds. 
\end{proposition}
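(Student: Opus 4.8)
The plan is to expand both Culler--Shalen norms via Lemma~\ref{lem-NormDistance} and to recognize the resulting quantity as a convex combination of absolute differences of rational numbers, to which the triangle inequality applies. First I would fix representatives $r_1=p_1/q_1$ and $r_2=p_2/q_2$ with $q_1,q_2>0$, and enumerate the boundary slopes as $s_i=u_i/v_i$ with $v_i>0$ for the finite ones, singling out the meridian (with coefficient $a_0$) in case it happens to occur among them. By Lemma~\ref{lem-NormDistance} we have $\norm{r}=\sum_i a_i\,\Delta(r,s_i)$ with non-negative even $a_i$. Using the relation $\Delta(p/q,\,u/v)=qv\,|p/q-u/v|$ recorded in Section~2, together with $\Delta(r,m)=q$ and $\Delta(m,s_i)=v_i$, I would derive for each finite slope the identity $\Delta(r,s_i)=\Delta(r,m)\,\Delta(m,s_i)\,|r-s_i|$, and hence
\[
\frac{\norm{r}}{q}=a_0+\sum_i a_i v_i\,|r-s_i|,\qquad \norm{m}=\sum_i a_i v_i,
\]
where the sums run over the finite boundary slopes (the meridian contributes $0$ to $\norm{m}$ since $\Delta(m,m)=0$).

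Dividing by $\norm{m}$ exhibits $\norm{r}/(q\norm{m})$ as $a_0/\norm{m}$ plus a convex combination of the numbers $|r-s_i|$ with weights $a_i v_i$ summing to $\norm{m}$. Adding the expressions for $r_1$ and $r_2$ and applying the triangle inequality $|r_1-s_i|+|r_2-s_i|\geq|r_1-r_2|$ termwise then gives
\[
\frac{\norm{r_1}}{q_1\norm{m}}+\frac{\norm{r_2}}{q_2\norm{m}}
=\frac{2a_0+\sum_i a_i v_i\bigl(|r_1-s_i|+|r_2-s_i|\bigr)}{\sum_i a_i v_i}
\geq|r_1-r_2|,
\]
since $a_0\geq0$ and the weights sum to the denominator $\norm{m}$. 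This establishes the inequality.

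For the equality clause I would track when each step is tight. The triangle inequality $|r_1-s_i|+|r_2-s_i|=|r_1-r_2|$ holds precisely when $s_i$ lies in the closed interval between $r_1$ and $r_2$; assuming without loss that $r_1\geq r_2$, the hypotheses that $r_1$ dominates the maximal boundary slope and $r_2$ is dominated by the minimal one force every $s_i$ with $a_i>0$ into $[r_2,r_1]$, so all these inequalities become equalities.

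The one delicate point, which I expect to be the main obstacle, is the possible meridional boundary slope: its contribution $2a_0/\norm{m}$ survives the triangle inequality as pure slack and would obstruct equality. I plan to dispose of it by observing that the finiteness of the maximal and minimal boundary slopes, implicit in the hypotheses, means the meridian cannot be among the slopes associated with ideal points, so $a_0=0$; with that, both remaining sources of slack vanish and equality follows. I would also take care to fix the sign and denominator conventions at the outset, ensuring that all $q_j$ and all $v_i$ are positive so that the factorizations of $\Delta$ used above are valid.
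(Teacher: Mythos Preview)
Your proof is correct and follows essentially the same route as the paper: expand $\norm{r_j}$ via Lemma~\ref{lem-NormDistance}, convert each $\Delta(r_j,s_i)$ into $q_j v_i\,|r_j-s_i|$, apply the triangle inequality $|r_1-s_i|+|r_2-s_i|\ge|r_1-r_2|$ termwise, and identify $\sum_i a_i v_i$ with $\norm{m}$. You are in fact slightly more careful than the paper in isolating a possible meridional boundary-slope contribution $a_0$; the paper simply writes all boundary slopes as $t_i/u_i$ with $u_i>0$ and thereby tacitly assumes no such term arises.
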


These propositions imply the next theorem immediately.

\begin{theorem}\label{th-length>diff>norm}
Let $r_1 = p_1 / q_1$ and $r_2 = p_2 / q_2$ be finite numerical slopes on $\partial M$. 
If $r_1$ is greater than or equal to the maximal boundary slope and 
$r_2$ is less than or equal to the minimal boundary slope for $M$, then 
$$
\frac{\length{r_1}}{q_1} + \frac{\length{r_2}}{q_2} > | r_1 - r_2| 
= \frac{\norm{r_1}}{q_1 \norm{m}} + \frac{\norm{r_2}}{q_2 \norm{m}} 
$$
holds for the maximal horotorus $T$. 
\end{theorem}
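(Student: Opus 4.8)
The plan is to derive Theorem~\ref{th-length>diff>norm} by concatenating Proposition~\ref{prp-length} and Proposition~\ref{prp-norm}; once those two results are in hand, the theorem needs no genuinely new argument. The left-hand strict inequality
\[
\frac{\length{r_1}}{q_1} + \frac{\length{r_2}}{q_2} > | r_1 - r_2 |
\]
is exactly the content of Proposition~\ref{prp-length} (on the maximal horotorus, where $\length{m} \geq 1$ by Lemma~\ref{lem-minimallength}). The right-hand equality
\[
| r_1 - r_2 | = \frac{\norm{r_1}}{q_1 \norm{m}} + \frac{\norm{r_2}}{q_2 \norm{m}}
\]
is the equality clause of Proposition~\ref{prp-norm}. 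So I would first invoke Proposition~\ref{prp-length} for the strict bound, then check that the theorem's hypothesis places us in the equality case of Proposition~\ref{prp-norm}, and finally splice the two displays into the single chain stated in the theorem.

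The only step that needs care is confirming that the hypothesis activates the equality in Proposition~\ref{prp-norm}, and I expect this to hinge on a weighted-average interpretation of the norm. Using Lemma~\ref{lem-NormDistance} together with $\Delta(p/q, c/d) = q d\,|p/q - c/d|$, one can write, over the boundary slopes $s_i = c_i/d_i$ with weights $w_i = a_i d_i \geq 0$,
\[
\frac{\norm{r}}{q \norm{m}} = \frac{\sum_i w_i\, | r - s_i |}{\sum_i w_i},
\]
since $\norm{r}/q = \sum_i a_i d_i\, | r - s_i |$ and $\norm{m} = \sum_i a_i d_i = \sum_i w_i$. Consequently
\[
\frac{\norm{r_1}}{q_1 \norm{m}} + \frac{\norm{r_2}}{q_2 \norm{m}} = \frac{\sum_i w_i \big( | r_1 - s_i | + | r_2 - s_i | \big)}{\sum_i w_i},
\]
a weighted average of the quantities $| r_1 - s_i | + | r_2 - s_i |$, each of which is at least $| r_1 - r_2 |$ by the triangle inequality on $\mathbb{R}$, with equality precisely when $s_i$ lies between $r_1$ and $r_2$. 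The hypothesis that $r_1$ is at least the maximal boundary slope and $r_2$ at most the minimal boundary slope forces every $s_i$ into the closed interval $[r_2, r_1]$, so all these triangle inequalities are simultaneously equalities and the weighted average collapses to $| r_1 - r_2 |$.

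Thus the main obstacle does not lie in the theorem itself but in the two supporting propositions; granting them, the proof is a two-line concatenation. For orientation I would also recall why Proposition~\ref{prp-length} is strict: representing $\length{r_i}/q_i$ as the Euclidean norm $\|\vec{L} + r_i \vec{M}\|$ of the vector for $\boldsymbol{l} + r_i \boldsymbol{m}$ on the horotorus (where $\vec{M}$ realizes $\length{m}$), the triangle inequality gives $\|\vec{L} + r_1 \vec{M}\| + \|\vec{L} + r_2 \vec{M}\| \geq | r_1 - r_2 | \, \length{m}$, the inequality is strict because these two vectors can never be antiparallel, and $\length{m} \geq 1$ on the maximal horotorus. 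Combining the strict left inequality with the collapsed right equality yields the displayed chain and completes the argument.
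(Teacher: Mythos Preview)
Your proposal is correct and follows the paper's own route exactly: the paper simply states that the theorem follows immediately from Propositions~\ref{prp-length} and~\ref{prp-norm}, and you concatenate the strict inequality of the former with the equality clause of the latter. Your weighted-average explanation of the equality case and your vector reformulation of the length inequality are just expanded retellings of the proofs of those two propositions, not a different approach.
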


Restricting the slopes to integral ones, we obtain Theorem~\ref{th-length>norm} from this.

We remark that the following is an immediate corollary of Proposition~\ref{prp-length} together with Lemma~\ref{lem-LengthEulerChara}.

\begin{corollary}
Let $r_1 = p_1 / q_1$ and $r_2 = p_2 / q_2$ be finite numerical slopes on $\partial M$. 
Suppose that there exist essential surfaces $S_1$ and $S_2$ with boundary slopes $r_1$ and $r_2$. 
Let $\chi_i$ denote the Euler characteristic of $S_i$ and $b_i$ the number of boundary components of $S_i$ for $i=1,2$. 
Then the following hold:
\[
6 \left( \frac{ -\chi_1}{b_1 q_1} + \frac{ -\chi_2}{b_2 q_2}  \right) > | r_1 - r_2|,
\]
\[
6 \left( q_2 \frac{ -\chi_1}{b_1} + q_1 \frac{ -\chi_2}{b_2 }  \right) > \Delta(  r_1 , r_2 ).
\]
\end{corollary}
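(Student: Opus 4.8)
The plan is to obtain both inequalities by a single substitution into Proposition~\ref{prp-length}, using Lemma~\ref{lem-LengthEulerChara} to replace the Euclidean lengths by the surface data. First I would apply Lemma~\ref{lem-LengthEulerChara} to each of the given essential surfaces: since $S_i$ is essential with boundary slope $r_i$, the lemma yields $\length{r_i} \le 6(-\chi_i)/b_i$ for $i=1,2$. Dividing each bound by $q_i$ and adding gives
\[
\frac{\length{r_1}}{q_1} + \frac{\length{r_2}}{q_2} \le 6\left( \frac{-\chi_1}{b_1 q_1} + \frac{-\chi_2}{b_2 q_2} \right).
\]
Combining this with the strict inequality of Proposition~\ref{prp-length}, namely $|r_1 - r_2| < \length{r_1}/q_1 + \length{r_2}/q_2$, the chain of relations immediately produces the first displayed inequality. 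The strictness survives because Lemma~\ref{lem-LengthEulerChara} only provides an upper (weak) bound on the lengths, which is inserted on the larger side of the already strict estimate.

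For the second inequality I would pass from the difference of numerical slopes to their distance using the identity recorded in the Definitions section, $\Delta(r_1, r_2) = q_1 q_2\,|r_1 - r_2|$. Multiplying the first inequality through by $q_1 q_2$ turns the left-hand side into $\Delta(r_1,r_2)$ and distributes the factor $q_1 q_2$ against the denominators $b_1 q_1$ and $b_2 q_2$ on the right, leaving exactly $6\bigl( q_2(-\chi_1)/b_1 + q_1(-\chi_2)/b_2 \bigr)$. This is precisely the second claimed bound, and again the inequality remains strict since $q_1 q_2 > 0$.

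I do not anticipate any real obstacle: both statements reduce to a one-step substitution followed by clearing denominators. The only points requiring a little care are ensuring that the direction of the inequality from Lemma~\ref{lem-LengthEulerChara} is compatible with the strict estimate of Proposition~\ref{prp-length} so that strictness is preserved, and applying the distance–difference conversion in the correct direction when deriving the second inequality.
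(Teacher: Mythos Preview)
Your proposal is correct and matches the paper's approach exactly: the paper states this corollary as an immediate consequence of Proposition~\ref{prp-length} combined with Lemma~\ref{lem-LengthEulerChara}, which is precisely the substitution-and-clear-denominators argument you outline. One cosmetic slip: when you describe multiplying the first inequality by $q_1 q_2$, you interchange ``left-hand side'' and ``right-hand side'' (it is $|r_1 - r_2|$ on the right that becomes $\Delta(r_1,r_2)$, and the denominators $b_i q_i$ are on the left), but the mathematics is unaffected.
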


Note that these are extensions of the results previously given for Montesinos knots in \cite{IM}.

\subsection{Lengths of numerical slopes}\label{subsec-Length>Norm-Length}

The idea of the proof of Proposition \ref{prp-length} was developed in \cite{I01}. 
Similar technique is also used in \cite{I08}. 


\begin{proof}[Proof of Proposition \ref{prp-length}]
Let $r_1 = p_1 / q_1$ and $r_2 = p_2 / q_2$ be non-meridional numerical slopes on $\partial M$.
We choose a horotorus $T$ and identify $\partial M$ with $T$. 
Let $\widetilde{T}$ be a component of the preimage of $T$ in the universal cover of Int$(M)$. 
This $\widetilde{T}$ is naturally identified with the Euclidean plane $\mathbb{E}^2$. 
The preimage of a point on $T$ gives a lattice on $\widetilde{T}$ with base point $O$ as utilized in \cite{I01}. 
Then, if a primitive lattice point $X$ corresponds to a slope $r$ on $T$, then the distance $d(O,X)$ between $O$ and $X$ equals the length $\length{r}$ of the slope $r$. 

Now, take the lattice points $P$ and $Q$ corresponding to $r_1$ and $r_2$ on $\widetilde{T}$. 
%
Let $P'$ (resp. $Q'$) be the intersection points between $OP$ (resp. $OQ$) and the line which contains all the points corresponding to the integral slopes. 
Then, we see that $d(O,P) = q_1 d (O,P')$ and $d(O,Q)= q_2 d (O,Q')$. 
Also note that the distance $d(P',Q')$ between $P'$ and $Q'$ is equal to $|r_1 - r_2| \,\length{m}$ for the meridian $m$.
Together with the triangle inequality $d(P',Q') < d (O,P') + d(O,Q')$ and Lemma~\ref{lem-minimallength}, it implies that
\[
\frac{\length{r_1}}{q_1} + \frac{\length{r_2}}{q_2} > | r_1 - r_2 |\, \length{m} \ge | r_1 - r_2 | .
\]
\end{proof}

\subsection{Norms of numerical slopes}

\begin{proof}[Proof of Proposition \ref{prp-norm}]
Let $r_1 = p_1 / q_1$ and $r_2 = p_2 / q_2$ be finite numerical slopes on $\partial M$, and $s_1=t_1/u_1, s_2=t_2/u_2, \cdots,s_n=t_n/u_n$ be the all boundary slopes on $\partial M$ with $u_i >0$. 

Then, we have the following from Lemma~\ref{lem-NormDistance}. 
\begin{align*}
\frac{\norm{p_1/q_1}}{q_1} + \frac{\norm{p_2/q_2}}{q_2} 
&=
\frac{1}{q_1} \sum^n_{i=1} a_i \Delta ( p_1/q_1 , s_i) 
+ \frac{1}{q_2}\sum^n_{i=1} a_i \Delta ( p_2/q_2 , s_i) \\
&= \frac{1}{q_1} \sum^n_{i=1} a_i | p_1 u_i - q_1 t_i| + \frac{1}{q_2} \sum^n_{i=1} a_i  | p_2 u_i - q_2 t_i | \\
&= \sum^n_{i=1} a_i u_i \left( \left| \frac{p_1}{q_1} - \frac{t_i}{u_i} \right| + \left| \frac{p_2}{q_2} - \frac{t_i}{u_i} \right| \right) \\
&\ge \left( \sum^n_{i=1} a_i u_i \right) \left| \frac{p_1}{q_1} - \frac{p_2}{q_2} \right| \\
&= \left( \sum^n_{i=1} a_i \Delta ( 1/0 , t_i/u_i ) \right) \left| \frac{p_1}{q_1} - \frac{p_2}{q_2} \right| \\
&= \norm{m} \cdot | r_1 - r_2|
\end{align*}
where $a_i$ is a non-negative even integer for each $i$ given in Lemma~\ref{lem-NormDistance}. 

It then follow that 
\[
\frac{\norm{r_1}}{q_1 \norm{m}} + \frac{\norm{r_2}}{q_2 \norm{m}} \geq | r_1 - r_2|.
\]
Moreover, if $r_1$ is greater than or equal to the maximal boundary slope and $r_2$ is less than or equal to the minimal boundary slope for $M$, then 
\[
\left| \frac{p_1}{q_1} - \frac{t_i}{u_i} \right| + \left| \frac{p_2}{q_2} - \frac{t_i}{u_i} \right| 
= \left| \frac{p_1}{q_1} - \frac{p_2}{q_2} \right| 
\]
holds for any $i$, implying that the equality in the above holds. 
\end{proof}

\section{Boundary slope diameter}\label{sec-diam} 

\begin{proof}[Proof of Theorem~\ref{th-diam}]

Let $s_1=t_1/u_1, \cdots,s_n=t_n/u_n$ be the all boundary slopes on $\partial M$ with $u_i >0$, and let $r=p/q$ be a numerical boundary slope on $\partial M$. 
Then, we have the following. 
\begin{align*}
\norm{r}=\norm{p/q} 
&= \sum^n_{i=1} a_i \Delta ( p/q , t_i/u_i ) \\
&= \sum^n_{i=1} a_i | p u_i - t_i q | \\
&= \sum^n_{i=1} a_i q u_i \left| \frac{p}{q} - \frac{t_i}{u_i}  \right| \\
&\le q \left( \sum^n_{i=1} a_i u_i \right) \diam{\BoundarySlopeSet} \\
&= q \,\norm{m} \,\diam{\BoundarySlopeSet} 
\end{align*}
with $a_i \ge 0$ by Lemma~\ref{lem-NormDistance}. 
It follows that
$$
\diam{\BoundarySlopeSet} \geq \frac{\norm{r}}{q \norm{m}}.
$$

Next, we show that the equality above cannot hold. 
Suppose that the equality holds. 
Since all the differences $| r - s_i|$ are at most $\diam{\BoundarySlopeSet}$, if the equality holds, all but at most one of $a_i$'s must be 0. 
This contradicts that $\norm{ \cdot }$ is a (genuine) norm on $H_1( \partial M ; \mathbb{R})$. 
\end{proof}

We also have an upper bound on $\diam{\BoundarySlopeSet}$ in terms of the Culler-Shalen norms of some boundary slopes as a corollary of Proposition~\ref{prp-norm}. 

\begin{corollary}\label{cor-ubdiam}
Let $s_1, s_2, \dots, s_n$ be numerical boundary slopes on $\partial M$ such that $s_1 > s_2 > \cdots > s_n$. 
Then
$$
\frac{\norm{s_1}}{\norm{m} \cdot \Delta (s_1,m)} 
+ \frac{\norm{s_n}}{\norm{m} \cdot \Delta (s_n,m)} 
\geq \diam{\BoundarySlopeSet} 
$$
holds. 
It follows that 
$$
2 \max_{1 \leq i \leq n} \left\{ \frac{\norm{s_i}}{\norm{m} \cdot \Delta (s_i,m)} \right\}
\geq \diam{\BoundarySlopeSet} .
$$    
\end{corollary}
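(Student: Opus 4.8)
The plan is to obtain the corollary as a direct consequence of Proposition~\ref{prp-norm}, applied to the two extreme boundary slopes among $s_1,\dots,s_n$. Since these are listed so that $s_1 > s_2 > \cdots > s_n$ and exhaust the boundary slopes of $M$, the slope $s_1$ is the maximal and $s_n$ the minimal non-meridional boundary slope, so that $s_1 - s_n = \diam{\BoundarySlopeSet}$. Writing $s_1 = p_1/q_1$ and $s_n = p_n/q_n$ with $q_1,q_n>0$, I would apply Proposition~\ref{prp-norm} with $r_1 = s_1$ and $r_2 = s_n$ to get
\[
\frac{\norm{s_1}}{q_1\norm{m}} + \frac{\norm{s_n}}{q_n\norm{m}} \geq |s_1 - s_n| = \diam{\BoundarySlopeSet}.
\]

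Next I would translate the denominators into distances from the meridian. Recall that the meridian corresponds to the numerical slope $1/0$ and that $\Delta(p/q,r/s) = |ps - qr|$; taking $r/s = 1/0$ gives $\Delta(s_i,m) = q_i$ for each $i$. Substituting $q_1 = \Delta(s_1,m)$ and $q_n = \Delta(s_n,m)$ into the inequality above yields exactly the first displayed formula of the corollary,
\[
\frac{\norm{s_1}}{\norm{m}\cdot\Delta(s_1,m)} + \frac{\norm{s_n}}{\norm{m}\cdot\Delta(s_n,m)} \geq \diam{\BoundarySlopeSet}.
\]

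For the second inequality, I would bound each of the two summands on the left by their common upper bound. Putting $\mu = \max_{1\leq i\leq n}\{\,\norm{s_i}/(\norm{m}\cdot\Delta(s_i,m))\,\}$, both the term indexed by $1$ and the term indexed by $n$ are at most $\mu$, so the left-hand side is at most $2\mu$. Combined with the first inequality this gives $\diam{\BoundarySlopeSet} \leq 2\mu$, which is the assertion.

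There is no real obstacle here: the corollary is a packaging of Proposition~\ref{prp-norm} in terms of distances from the meridian. The only points needing care are the identity $\Delta(s_i,m) = q_i$ and the fact that the two extreme slopes $s_1,s_n$ realize the diameter of $\BoundarySlopeSet$, which is where the hypothesis that $s_1,\dots,s_n$ are all the boundary slopes is used. One may also note that, by the ``moreover'' clause of Proposition~\ref{prp-norm}, the first inequality is in fact an equality.
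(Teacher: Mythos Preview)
Your proposal is correct and follows exactly the route the paper takes: the paper's own proof simply says the first assertion ``immediately follows from Proposition~\ref{prp-norm}'' and that the second then follows, so you have merely spelled out the details (the identification $\Delta(s_i,m)=q_i$ and the observation that $s_1,s_n$ realize the diameter) that the paper leaves implicit. Your closing remark that the first inequality is in fact an equality, by the ``moreover'' clause of Proposition~\ref{prp-norm}, is also correct.
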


\begin{proof}
The first assertion immediately follows from Proposition~\ref{prp-norm}. 
It then implies that the second assertion. 
\end{proof}

\section{Slopes for the figure-eight knot}\label{sec-FigureEight}

In this section, we give a description of the Euclidean length on the maximal horotorus and the Culler-Shalen norm of slopes for the figure-eight knot exterior. 
This demonstrates how our inequalities are effective.

Let $M$ be the exterior of the figure-eight knot in $S^3$. This knot is well-known to be hyperbolic. 
See \cite{Th} for example. We take the maximal horotorus $T$. 
In \cite{Th}, the modulus of the maximal horotorus is investigated in detail. On the universal cover of $T$, we have the following diagram.

\begin{figure}[htb]
\includegraphics[width=0.7\textwidth]{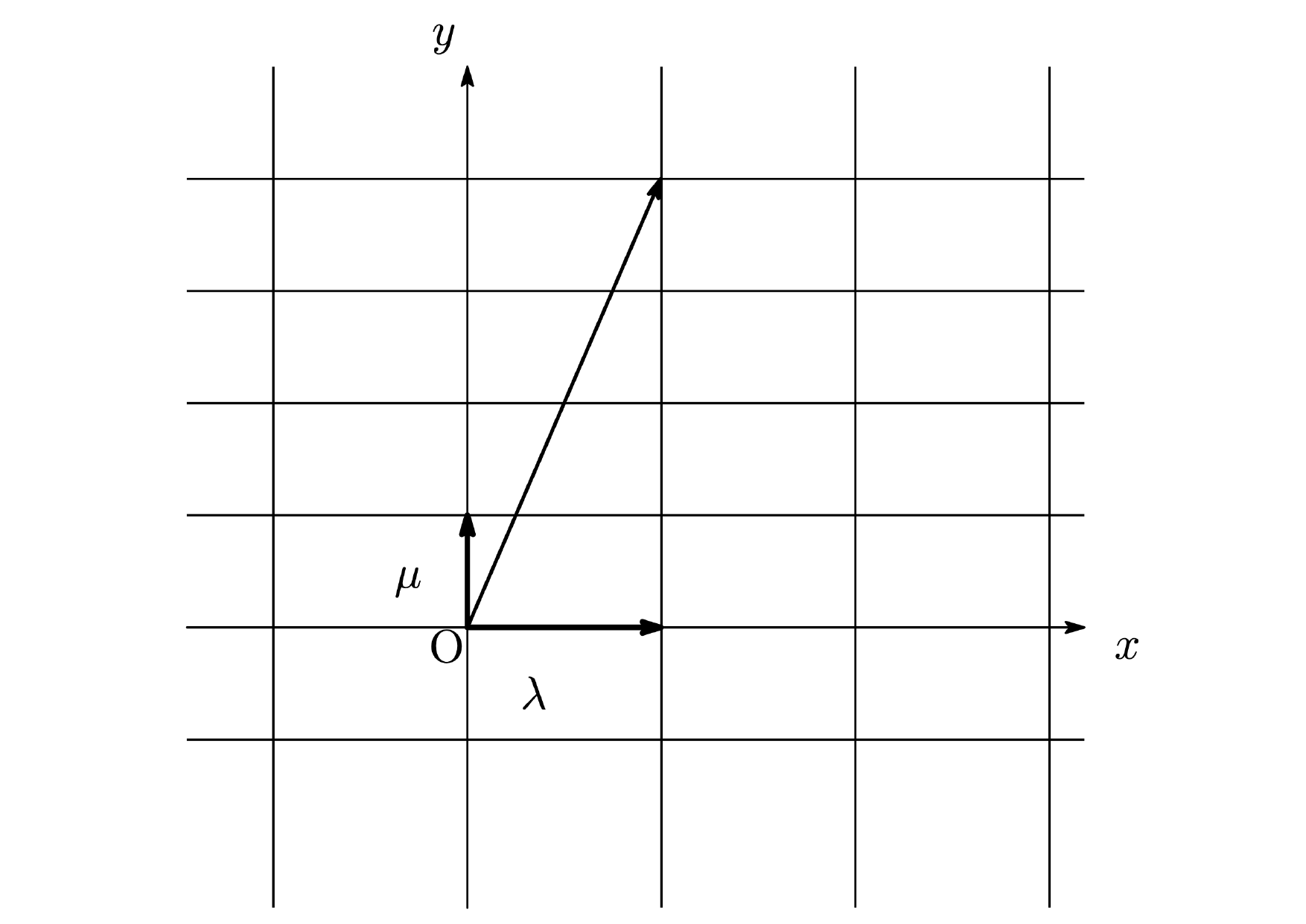}
\caption{}
\end{figure}

In the figure, $\mu, \lambda$ denote the lifts of the meridian $m$ and the preferred longitude, with suitable orientations.

It is shown that $\mu$ and $\lambda$ are perpendicular to each other, and $\length{\mu} = 1$ and $\length{\lambda} = 2 \sqrt{3}$. Thus, for example, it is calculated that $\length{r_{(1,4)}} = 2 \sqrt{7}$ for the slope $r_{(1,4)}$, which runs once in the longitudinal direction and 4 times in the meridional direction.

In general, $\length{p/q}$ of the numerical slope $p/q$ is calculated as
\[
\length{p/q} = \sqrt{ p^2 + 12q^2} = q \sqrt{(p/q)^2 + 12}.
\]

On the other hand, 
%
%
for the figure-eight knot exterior case, from the results given in \cite{O}, we have 
$\norm{r} = 2 \, \Delta (r, 4/1) + 2 \, \Delta (r, -4/1) $. 
It follows that the Culler-Shalen norm $\norm{p/q}$ of the numerical slope $p/q$ is calculated as
\begin{align*}
\norm{p/q} &= 2 \, \Delta (p/q, 4/1) + 2 \, \Delta (p/q, -4/1) \\
&= 2 q ( |p/q - 4| + | p/q + 4 | )   
\end{align*}

%
%

Thus, by considering the ratio $\norm{r}/\length{r}$ for slopes $r$, we obtain
$$
\frac{8}{\sqrt{7}} \length{r} \leq \norm{r} \leq \frac{8}{\sqrt{3}} \length{r} \ .
$$
Please compare this with the inequality given by Theorem~\ref{th-norm>length}. 
Moreover, in this case, $\norm{m} = 4$,  
so we have  
$$
\length{r} \geq \frac{\sqrt{3}}{2} \, \frac{\norm{r}}{\norm{m}} \ .
$$
The equality holds for the preferred longitude.

In particular, for an integral slope $r$ with $r \ge 4$ or $r \le -4$, we have 
$$
\length{r} > \frac{\norm{r}}{\norm{m}} \ .
$$
Please compare this with the inequality given by Theorem~\ref{th-length>norm}. 
This shows that the inequality given in the theorem is optimal in a sense. 

Concerning Theorem~\ref{th-diam} and Corollary~\ref{cor-ubdiam}, we see that $\diam{\BoundarySlopeSet} = 4-(-4) = 8$, $\norm{4/1} = \norm{-4/1} = 16$, and $\norm{m}=4$. 
Thus we have
\[
\diam{\BoundarySlopeSet} = 8 > \frac{\norm{4/1}}{\norm{m}} = \frac{\norm{-4/1}}{\norm{m}} = 4 
\]
and 
\[
\frac{\norm{4/1}}{\norm{m}} + \frac{\norm{-4/1}}{\norm{m}} 
= 8 = 
\diam{\BoundarySlopeSet} .
\]

\bibliographystyle{amsplain}
\bibliography{references}

\end{document}